\documentclass[preprint,times]{elsarticle}

% Language setting
\usepackage[english]{babel}

% Useful packages
\usepackage{amsmath,amssymb,amsthm}
\allowdisplaybreaks
\usepackage{graphicx}
\graphicspath{{figures/}}
\usepackage{hyperref}
\usepackage{bm}
\usepackage[hang,small,bf]{caption}
\usepackage{subcaption}
\usepackage{tabularx}
\captionsetup{compatibility=false}
\usepackage{setspace}
\doublespacing

\DeclareMathOperator{\interior}{int}
\DeclareMathOperator{\cl}{cl}
\DeclareMathOperator{\dom}{dom}

\DeclareMathOperator{\diag}{diag}

\DeclareMathOperator{\real}{\mathbb{R}}
\DeclareMathOperator{\comp}{\mathbb{C}}
\DeclareMathOperator{\re}{Re}

\DeclareMathOperator*{\argmin}{argmin}
\DeclareMathOperator{\cossim}{sim}

\newcommand{\ie}{\textit{i.e.}}

\captionsetup[figure]{labelfont={rm},name={Fig.},labelsep=period}

\newtheorem{theorem}{Theorem}
\newtheorem{corollary}{Corollary}

\newtheorem{definition}{Definition}
\newtheorem{remark}{Remark}

\usepackage[noend]{algpseudocode}

\usepackage{algorithm}

\algnewcommand{\Input}[1]{
\State\textbf{Input:}\hspace*{0.5em}\parbox[t]{.85\linewidth}{\raggedright #1}
}
\algnewcommand{\Initialization}[1]{
\State\textbf{Initialization:}\hspace*{0.5em}\parbox[t]{.74\linewidth}{\raggedright #1}
}

% \newcommand\CONDITION[2]%
%   {\begin{tabular}[t]{@{}l@{}l@{}}
%      #1&#2
%   \end{tabular}%
%   }
% \algdef{SE}[IF]{If}{EndIf}[1]%
%   {\algorithmicif\ \CONDITION{#1}{\algorithmicthen}}%

% \journal{Signal Processing}
\journal{ }

\begin{document}

\begin{frontmatter}
    \title{Blind Deconvolution with Non-smooth Regularization via Bregman Proximal DCAs}
\author[inst1]{Shota~Takahashi}
\ead{takahashi.shota@ism.ac.jp}
\address[inst1]{
Department of Statistical Science, The Graduate University for Advanced Studies,
10--3 Midori-cho, 
Tachikawa,190--8562, 
Tokyo,
Japan}

\author[inst1,inst2,inst3]{Mirai~Tanaka}
\ead{mirai@ism.ac.jp}
\author[inst1,inst2]{Shiro~Ikeda}
\ead{shiro@ism.ac.jp}
\address[inst2]{
Department of Statistical Inference and Mathematics, The Institute of Statistical Mathematics
10--3 Midori-cho, 
Tachikawa,
190--8562, 
Tokyo,
Japan}
\address[inst3]{
Continuous Optimization Team, RIKEN Center for Advanced Intelligence Project,
Nihonbashi 1-chome Mitsui Building, 15th floor, 1--4--1 Nihonbashi, 
Chuo-ku,
103--0027, 
Tokyo,
Japan}

\begin{abstract}
Blind deconvolution is a technique to recover an original signal without knowing a convolving filter.
It is naturally formulated as a minimization of a quartic objective function under some assumption.
Because its differentiable part does not have a Lipschitz continuous gradient, existing first-order methods are not theoretically supported.
In this paper, we employ the Bregman-based proximal methods, whose convergence is theoretically guaranteed under the $L$-smooth adaptable ($L$-smad) property.
We first reformulate the objective function as a difference of convex (DC) functions and apply the Bregman proximal DC algorithm (BPDCA).
This DC decomposition satisfies the $L$-smad property.
The method is extended to the BPDCA with extrapolation (BPDCAe) for faster convergence.
When our regularizer has a sufficiently simple structure, each iteration is solved in a closed-form expression, and thus our algorithms solve large-scale problems efficiently.
We also provide the stability analysis of the equilibriums and demonstrate the proposed methods through numerical experiments on image deblurring.
The results show that BPDCAe successfully recovered the original image and outperformed other existing algorithms.
\end{abstract}
\begin{keyword}
Blind deconvolution \sep DC optimization \sep Bregman proximal DC algorithms \sep image deblurring
\end{keyword}
\end{frontmatter}

\section{Introduction}\label{sec:introduction}
We consider the convolution of a filter $\bm{f}\in\real^m$ and a signal $\bm{g}\in\real^m$, given by
\begin{align}
    \label{eq:blind-deconv}
    \tilde{\bm{y}} = \bm{f}*\bm{g}.
\end{align}
Our goal is to recover $\bm{g}$ from $\tilde{\bm{y}}$ without knowing $\bm{f}$.
This problem is known as blind deconvolution.
Blind deconvolution arises in many fields of science and engineering, such as sensor networks~\cite{balzano07}, optics~\cite{chen13}, astronomy~\cite{fetick20,jefferies93}, communication engineering~\cite{liu95}, and image processing~\cite{ahmed14,jain22,li19,zhao16}.

Without any assumptions, blind deconvolution is ill-posed.
A common approach is to assume that $\bm{f}$ and $\bm{g}$ belong to known subspaces~\cite{ahmed14}.
Concretely, for known linear operators $\tilde{\bm{B}}:\real^{d_1}\to\real^m$ and $\tilde{\bm{A}}:\real^{d_2}\to\real^m$, we assume that there exist the true vectors $\bm{h}^{\circ}\in\real^{d_1}$ and $\bm{x}^{\circ}\in\real^{d_2}$ such that
$\bm{f} = \tilde{\bm{B}}\bm{h}^{\circ}$ and $\bm{g} = \tilde{\bm{A}}\bm{x}^{\circ}$, where $d_1, d_2 < m$.
Moreover, we consider blind deconvolution in the Fourier domain.
Applying the DFT to both sides of~\eqref{eq:blind-deconv} and letting $\bm{F}\in\comp^{m\times m}$ be the unitary discrete Fourier transform (DFT) matrix, we obtain
\begin{align*}
    \sqrt{m}\bm{F}\tilde{\bm{y}} = \sqrt{m}\bm{F} (\bm{f} *\bm{g}) = \sqrt{m}\bm{F}\bm{f} \odot \sqrt{m}\bm{F}\bm{g} = m\bm{F}\tilde{\bm{B}}\bm{h}^{\circ} \odot \bm{F}\tilde{\bm{A}}\bm{x}^{\circ},
\end{align*}
where the second equality holds from the convolution theorem~\cite[Section 4.4.2]{proakis06} and $\odot$ denotes the Hadamard (elementwise) product.
Thus, \eqref{eq:blind-deconv} is rewritten in the Fourier domain as $\bm{y} = \bm{Bh}^{\circ} \odot \overline{\bm{Ax}^{\circ}}$, where $\bm{y} := \frac{1}{\sqrt{m}}\bm{F}\tilde{\bm{y}}$, $\bm{B} := \bm{F}\tilde{\bm{B}}$, and $\overline{\bm{A}} := \bm{F}\tilde{\bm{A}}$.

Now, the goal of our problem is to estimate $\bm{h}^{\circ}$ and $\bm{x}^{\circ}$ from $\bm{y}$.
To evaluate the fidelity, we consider the squared error function $F(\bm{h}, \bm{x}) = \frac{1}{2} \|\bm{Bh}\odot\overline{\bm{Ax}} - \bm{y}\|^2_2$.
In addition, in order to incorporate with the image characteristics, we also consider a regularization term $G:\real^{d_1+d_2}\to(-\infty, +\infty]$.
It may not be differentiable.
Commonly used regularization terms include non-differentiable functions, such as the $\ell_1$ norm and the total variation.
To compute $\bm{h}^{\circ}$ and $\bm{x}^{\circ}$, in this paper, we minimize the sum of these two functions and a constraint set $\cl{C}$, where $\cl{C}$ denotes the closure of a nonempty open convex set $C\subset\real^{d_1+d_2}$.
That is, we consider the following optimization problem:
\begin{align}
    \label{prob:blind-deconv}
    \min_{(\bm{h}, \bm{x})\in\cl{C}}\quad
    F(\bm{h}, \bm{x}) + G(\bm{h}, \bm{x}).
\end{align}
Note that $F$ is a quartic function because it has a quartic term $h_i h_j x_k x_l$ with respect to $(\bm{h}, \bm{x})$, and thus it does not have a Lipschitz continuous gradient.
Hence, we cannot rely on the convergence analysis of existing first order methods, such as the fast iterative shrinkage-thresholding algorithm (FISTA)~\cite{beck09}, because their convergence analysis depends on the existence of Lipschitz continuous gradients.
Instead, we try to resort to Bregman-based proximal gradient algorithms~\cite{beck18,wu21}.
These algorithms generalize the proximal gradient method by replacing the squared Euclidean distance with the Bregman distance $D_H$ associated with a kernel generating distance $H$.
The algorithms generates a sequence converging to a stationary point under the $L$-smooth adaptable ($L$-smad) property of $(F, H)$ defined later.
For our problem, however, finding an appropriate $H$ is difficult because of the bilinear term of $F$.

To address the problem, we first reformulate $F$ into a difference of convex (DC) functions and apply the Bregman proximal DC algorithm (BPDCA)~\cite{takahashi21}.
For our DC decomposition $F = F_1 - F_2$, we find an $H$ that makes $(F_1, H)$ $L$-smad, and hence BPDCA with corresponding $D_H$ converges to a stationary point of~\eqref{prob:blind-deconv}.
We also apply a variant, the BPDCA with extrapolation (BPDCAe)~\cite{takahashi21}, which converges faster.
When $G$ has a simple structure, each iteration of BPDCA(e) is solved in a closed form.
This fact implies that our algorithms solve large-scale problems efficiently.
We also show the stability property of the algorithms around the equilibriums and compare them with existing algorithms.
Moreover, we show a numerical example of image deblurring.

This paper is organized as follows. 
Section~2 shows that $F$ is reformulated into a DC functions, introduces the Bregman Proximal DC Algorithms, and provides the stability analysis.
Section~3 illustrates numerical experiments on image deblurring. 
Section~4 summarizes our contributions.

\paragraph*{Notation}
Vectors and matrices are shown in boldface.
All one vector is $\bm{1}\in \real^m$ and the identity matrix is $\bm{I}_d\in \real^{d\times d}$.
% $\diag(\cdot)$ denotes the diagonal operator.
Let $|\bm{z}|$ and $\bm{z}^2$ be elementwise absolute and squared vectors for $\bm{z}\in\comp^d$, respectively.
$\re(\bm{z})$, $\bar{\bm{z}}$, and $\bm{z}^{\mathsf{H}}$ denote its real part, complex conjugate, and complex conjugate transpose, respectively.
The inner product of $\bm{z}, \bm{w}\in\comp^d$ is defined by $\langle\bm{z}, \bm{w}\rangle = \bm{z}^{\mathsf{H}} \bm{w}$.

In what follows, we use the following notations.
Let $\interior{C}$ be the interior of a set $C\subset\real^d$.
For a proper and lower semicontinuous function $F:\real^d\to(-\infty, +\infty]$, the domain $\{\bm{z}\in\real^d\ |\ F(\bm{z}) < +\infty\}$ of $F$ is denoted by $\dom F$, and the domain $\{\bm{z}\in\real^d\ |\ \partial F(\bm{z})\neq \emptyset\}$ of the limiting subdifferential of $F$ is also denoted by $\dom\partial F$, where $\partial F$ denotes the limiting subdifferential of $F$ defined in~\cite[Definition 5]{takahashi21}.
Note that the limiting subdifferential coincides with the (classical) subdifferential of $F$ under the convexity.

\section{Proposed Method}
\subsection{DC Decomposition}\label{sec:dc-optimization}
We first reformulate $F$ in~\eqref{prob:blind-deconv} into a DC function. Let us define $F_1$ and $F_2$ as follows:
\begin{align*}
    F_1(\bm{h}, \bm{x}) &= \frac{1}{4}\|\bm{Bh}\|^4_4 + \frac{1}{4}\|\bm{Ax}\|_4^4 + \frac{1}{2}\left(\|\bm{Bh}\odot\bm{Ax}\|^2_2 +\|\bm{y}\odot\bm{Bh}\|^2_2 + \|\bm{Ax}\|^2_2 + \|\bm{y}\|^2_2\right),\\
    F_2(\bm{h}, \bm{x}) &= \frac{1}{4}\|\bm{Bh}\|^4_4 + \frac{1}{4}\|\bm{Ax}\|_4^4 + \frac{1}{2}\|\bar{\bm{y}}\odot \bm{Bh} + \overline{\bm{Ax}}\|^2_2.
\end{align*}
$F_2$ is convex because a composite function of a linear transform and a convex function is convex~\cite[Theorem 3.1.6]{nesterov18}.
By denoting $\bm{b}_j$ and $\bm{a}_j$ be the $j$-th column vectors of $\bm{B}^{\mathsf{H}}$ and $\bm{A}^{\mathsf{H}}$, respectively, we have
\begin{align*}
    F_1(\bm{h}, \bm{x}) &= \frac{1}{4}\sum_{j=1}^m\left(|\bm{b}_j^{\mathsf{H}}\bm{h}|^2 + |\bm{a}_j^{\mathsf{H}}\bm{x}|^2\right)^2+\frac{1}{2}\left(\|\bm{y}\odot\bm{Bh}\|^2_2 + \|\bm{Ax}\|^2_2 + \|\bm{y}\|^2_2\right),
\end{align*}
which proves the convexity of $F_1$.
Since $F = F_1 - F_2$ holds,~\eqref{prob:blind-deconv} is equivalent to the following DC optimization problem:
\begin{align}
    \label{prob:blind-deconv-dc}
    \min_{(\bm{h}, \bm{x})\in\cl{C}}\quad
    F_1(\bm{h}, \bm{x}) - F_2(\bm{h}, \bm{x}) + G(\bm{h}, \bm{x}).
\end{align}

\subsection{Bregman Proximal DC Algorithms}
Before showing DC algorithms,
we define the kernel generating distances, the Bregman distances, and the $L$-smooth adaptable property.
\begin{definition}[Kernel generating distances~\cite{beck18} and Bregman distances~\cite{bregman67}]
    \label{def:kernel}
    Let $C$ be a nonempty open convex subset of $\real^d$.
    $H:\real^d\to(-\infty, +\infty]$ is called a kernel generating distance associated with $C$ if it meets the following conditions:
    \begin{enumerate}
        \renewcommand{\labelenumi}{\rm{(\roman{enumi})}}
        \item $H$ is proper, lower semicontinuous, and convex, with $\dom H \subset \cl C$ and $\dom \partial H = C$.
        \item $H$ is $\mathcal{C}^1$ on $\interior\dom H = C$.
    \end{enumerate}
    We denote the class of the kernel generating distances associated with $C$ by $\mathcal{G}(C)$.
    Given $H \in \mathcal{G}(C)$, the Bregman distance $D_H: \dom H \times \interior\dom H \to \real_+$ is defined by 
    \begin{align*}
        D_H(\bm{z}, \bm{w}) := H(\bm{z}) - H(\bm{w}) - \langle \nabla H(\bm{w}), \bm{z} - \bm{w} \rangle.
    \end{align*}
\end{definition}
\begin{definition}[$L$-smooth adaptable ($L$-smad)~\cite{beck18}]
    \label{def:l-smad}
    Consider a pair of functions $(F, H)$ satisfying the following conditions:
    \begin{enumerate}
        \renewcommand{\labelenumi}{\rm{(\roman{enumi})}}
        \item $H \in \mathcal{G}(C)$,
        \item $F: \real^d \to (-\infty, +\infty]$ is proper, lower semicontinuous, and $\mathcal{C}^1$ on $C = \interior\dom H$ with $\dom F \supset \dom H$.
    \end{enumerate}
    The pair $(F, H)$ is called $L$-smooth adaptable ($L$-smad) on $C$ if there exists $L > 0$ such that $L H - F$ and $L H + F$ are convex on $C$.
\end{definition}

\begin{algorithm}[H]
	\caption{BPDCA~\cite{takahashi21} for DC optimization problem~\eqref{prob:blind-deconv-dc}}
	\label{alg:bpdca}
    \begin{algorithmic}[t]
    \Input{$H\in\mathcal{G}(C)$, $\bm{z}^0 \in C$ and $0 < \lambda < 1 / L$.}
    \For{$k = 0, 1, 2, \ldots,$}
    % \vspace{-1\baselineskip}
    \State Compute
    \begin{align}
        \label{subprob:bpdca}
        \bm{z}^{k+1} = \argmin_{\bm{z}\in \cl{C}}\{
        &\langle\nabla F_1(\bm{z}^k) - \nabla F_2(\bm{z}^k), \bm{z}\rangle + G(\bm{z}) + \frac{1}{\lambda}D_H(\bm{z}, \bm{z}^k)\}.
    \end{align}
    \EndFor
    \end{algorithmic}
\end{algorithm}

\begin{algorithm}[H]
	\caption{BPDCAe~\cite{takahashi21} for DC optimization problem~\eqref{prob:blind-deconv-dc}}
	\label{alg:bpdcae}
    \begin{algorithmic}[t]
    \Input{$H\in\mathcal{G}(C)$, $\bm{z}^{-1} = \bm{z}^0 \in C$, $t_{-1}=t_{0}=1$, $0 < \lambda < 1 / L$, $\rho\in[0, 1)$, and $N\in\mathbb{N}$.}
    \For{$k = 0, 1, 2, \ldots,$}
    \State Set $\bm{w}^k = \bm{z}^k + \frac{t_{k-1} - 1}{t_k}(\bm{z}^k - \bm{z}^{k-1})$, $t_{k+1} = \frac{1 + \sqrt{1 + 4t_k^2}}{2}$.
    \If{$k = k' N$ with $k'\in\mathbb{N}$, $\bm{w}^k \notin C$, or $D_H(\bm{z}^k, \bm{w}^k) > \rho D_H(\bm{z}^{k-1}, \bm{z}^k)$}
        \State Set $t_{k-1} = t_k = 1$ and $\bm{w}^k = \bm{z}^k$.
    \EndIf
    \State Compute
        \begin{align}
        \label{subprob:bpdcae}
            \bm{z}^{k+1}
            = \argmin_{\bm{z}\in \cl{C}}\{
            &\langle\nabla F_1(\bm{w}^k) - \nabla F_2(\bm{z}^k), \bm{z}\rangle + G(\bm{z}) + \frac{1}{\lambda}D_H(\bm{z}, \bm{w}^k)\}.
        \end{align}
    \EndFor
    \end{algorithmic}
\end{algorithm}

To solve~\eqref{prob:blind-deconv-dc}, we introduce the Bregman proximal DC algorithm (BPDCA)~\cite{takahashi21} in Algorithm~\ref{alg:bpdca}, where we denoted $\bm{z} = (\bm{h}, \bm{x})$.
At each iteration of Algorithm~\ref{alg:bpdca}, we minimize $\langle\nabla F_1(\bm{z}^k) - \nabla F_2(\bm{z}^k), \bm{z}\rangle + G(\bm{z}) + \frac{1}{\lambda}D_H(\bm{z}, \bm{z}^k)$.
The first and second terms of this objective function are a first-order approximation of the original objective function, that is, $F_1(\bm{z}) - F_2(\bm{z}) + G(\bm{z})\simeq F_1(\bm{z}^k) - F_2(\bm{z}^k) + \langle\nabla F_1(\bm{z}^k) - \nabla F_2(\bm{z}^k), \bm{z}\rangle + G(\bm{z})$.
The third term is the Bregman proximity $\frac{1}{\lambda}D_H(\bm{z}, \bm{z}^k)$ between $\bm{z}$ and $\bm{z}^k\in\cl{C}$.
The parameter $\lambda$ controls the balance of these two components.
We also introduce an accelerated version of BPDCA with extrapolation (BPDCAe)~\cite{takahashi21} in Algorithm~\ref{alg:bpdcae}.
We solve Subproblem~\eqref{subprob:bpdcae}, which is similar to Subproblem~\eqref{subprob:bpdca} in Algorithm~\ref{alg:bpdca}, at each iteration. 
See~\cite{takahashi21} for more details.
The function $G$ is possibly non-smooth, and when $G$ has a sufficiently simple structure,~\eqref{subprob:bpdca} and~\eqref{subprob:bpdcae} are easily solved.
For instance, we solve Subproblem~\eqref{subprob:bpdca} to obtain the sparse signal and filter when $G(\bm{h}, \bm{x}) = \theta_1\|\bm{h}\|_1 + \theta_2\|\bm{x}\|_1$ for $\theta_1, \theta_2 \geq 0$.
Let $\bm{u}=\mathcal{S}_{\lambda\theta_1}( \lambda\nabla_{\bm{h}} F(\bm{z}^k) - \nabla_{\bm{h}} H(\bm{z}^k))$ and $ \bm{v}=\mathcal{S}_{\lambda\theta_2}( \lambda\nabla_{\bm{x}} F(\bm{z}^k) - \nabla_{\bm{x}} H(\bm{z}^k))$, where $\mathcal{S}_{\tau}$ is the soft-thresholding operator~\cite{beck18} with $\tau>0$.
We can prove from~\cite[Proposition 5.1]{beck18} that $\bm{z}^{k+1} = (-t^*\bm{u}, -t^*\bm{v})$ solves Subproblem~\eqref{subprob:bpdca}, where $t^*$ is the unique positive real root of the cubic equation $t^3(\|\bm{u}\|^2_2+\|\bm{v}\|^2_2)+t-1=0$.
Note that every cubic equation has a closed-form solution via Cardano's formula.
This fact indicates solution of~\eqref{subprob:bpdca} is expressed in closed form.
It is also true for Subproblem~\eqref{subprob:bpdcae}.
When $(F_1, H)$ is $L$-smad, the convergence of the algorithms to a stationary point is guaranteed~\cite{takahashi21}.
In theory and practice, BPDCAe converges faster than BPDCA~\cite{takahashi21}, whereas BPDCAe requires the convexity of $G$.

The following theorem provides appropriate $H$ and $L$ in use of Algorithms~\ref{alg:bpdca} and~\ref{alg:bpdcae}.
\begin{theorem}%[$L$-smad parameter for $F_1$]
\label{theo:l-smad-para}
    Let $H$ be defined by 
    \begin{align}
        \label{def:kernel-blind-dc}
        H(\bm{h}, \bm{x}) = \frac{1}{4}\left(\|\bm{h}\|^2_2 + \|\bm{x}\|^2_2\right)^2 + \frac{1}{2}\left(\|\bm{h}\|^2_2 + \|\bm{x}\|^2_2\right).
    \end{align}
    Then, for any $L$ satisfying
    \begin{align}
        \label{ineq:l-smad-blind-deconv-dc}
        L \geq \sum_{j=1}^m(3\|\bm{b}_j\|^4_2+3\|\bm{a}_j\|^4_2&+\|\bm{b}_j\|^2_2\|\bm{a}_j\|^2_2+|y_j|^2\|\bm{b}_j\|^2_2+\|\bm{a}_j\|^2_2),
    \end{align}
    the function $L H - F_1$ is convex on $\real^{d_1+d_2}$.
\end{theorem}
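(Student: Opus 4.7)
The plan is to verify the operator inequality $L\nabla^2 H(\bm{z}) \succeq \nabla^2 F_1(\bm{z})$ pointwise on $\real^{d_1+d_2}$, which is equivalent to convexity of $LH - F_1$ since both functions are $\mathcal{C}^2$. A direct computation yields $\nabla^2 H(\bm{z}) = (\|\bm{h}\|^2_2 + \|\bm{x}\|^2_2 + 1)\bm{I}_{d_1+d_2} + 2\bm{z}\bm{z}^{\mathsf{T}}$, giving the conservative lower bound
\begin{equation*}
    \bm{u}^{\mathsf{T}}\nabla^2 H(\bm{z})\bm{u} \geq (\|\bm{h}\|^2_2 + \|\bm{x}\|^2_2 + 1)(\|\bm{u}_h\|^2_2 + \|\bm{u}_x\|^2_2)
\end{equation*}
for any direction $\bm{u} = (\bm{u}_h,\bm{u}_x)$; the $2\bm{z}\bm{z}^{\mathsf{T}}$ contribution will not be needed.

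For the upper bound on $\bm{u}^{\mathsf{T}}\nabla^2 F_1(\bm{z})\bm{u}$, I would treat the quartic piece $\frac{1}{4}\sum_{j=1}^m \phi_j^2$ with $\phi_j = |\bm{b}_j^{\mathsf{H}}\bm{h}|^2 + |\bm{a}_j^{\mathsf{H}}\bm{x}|^2$ separately from the quadratic tail $\frac{1}{2}\|\bm{y}\odot\bm{Bh}\|^2_2 + \frac{1}{2}\|\bm{Ax}\|^2_2$. Direct differentiation of the quartic piece gives $\bm{u}^{\mathsf{T}}\nabla^2(\tfrac{1}{4}\phi_j^2)\bm{u} = 2(p_j+q_j)^2 + \phi_j(r_j+s_j)$, where $p_j = \re(\overline{\bm{b}_j^{\mathsf{H}}\bm{u}_h}\,\bm{b}_j^{\mathsf{H}}\bm{h})$, $r_j = |\bm{b}_j^{\mathsf{H}}\bm{u}_h|^2$, and $q_j, s_j$ are the analogues for $\bm{a}_j, \bm{x}, \bm{u}_x$. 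Cauchy--Schwarz on the positive-semidefinite form $\re(\bm{b}_j\bm{b}_j^{\mathsf{H}})$ gives $p_j^2 \leq |\bm{b}_j^{\mathsf{H}}\bm{h}|^2 r_j$ and similarly $q_j^2 \leq |\bm{a}_j^{\mathsf{H}}\bm{x}|^2 s_j$, whence $(p_j+q_j)^2 \leq \phi_j(r_j+s_j)$ and therefore $\bm{u}^{\mathsf{T}}\nabla^2(\tfrac{1}{4}\phi_j^2)\bm{u} \leq 3\phi_j(r_j+s_j)$. Summing over $j$ after the elementary estimates $|\bm{b}_j^{\mathsf{H}}\bm{h}|^2 \leq \|\bm{b}_j\|_2^2\|\bm{h}\|_2^2$ and its three analogues, and adding the tail contribution $\sum_j|y_j|^2\|\bm{b}_j\|_2^2\|\bm{u}_h\|_2^2 + \sum_j\|\bm{a}_j\|_2^2\|\bm{u}_x\|_2^2$, produces an upper bound on $\bm{u}^{\mathsf{T}}\nabla^2 F_1\bm{u}$ that is a nonnegative linear combination of the six monomials $\|\bm{h}\|_2^2\|\bm{u}_h\|_2^2, \|\bm{h}\|_2^2\|\bm{u}_x\|_2^2, \|\bm{x}\|_2^2\|\bm{u}_h\|_2^2, \|\bm{x}\|_2^2\|\bm{u}_x\|_2^2, \|\bm{u}_h\|_2^2, \|\bm{u}_x\|_2^2$. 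Expanding the lower bound on $L\bm{u}^{\mathsf{T}}\nabla^2 H\bm{u}$ in the same six monomials reduces the operator inequality to a termwise comparison of coefficients, giving the sufficient conditions $L\geq 3\sum_j\|\bm{b}_j\|_2^4$, $L\geq 3\sum_j\|\bm{a}_j\|_2^4$, $L\geq 3\sum_j\|\bm{b}_j\|_2^2\|\bm{a}_j\|_2^2$, $L\geq\sum_j|y_j|^2\|\bm{b}_j\|_2^2$, and $L\geq\sum_j\|\bm{a}_j\|_2^2$.

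The main obstacle is the third of these conditions: the pointwise estimate naturally produces coefficient $3$ in front of $\sum_j\|\bm{b}_j\|_2^2\|\bm{a}_j\|_2^2$, whereas the stated bound carries only coefficient $1$. The gap is closed by Cauchy--Schwarz in the index $j$ followed by AM--GM, namely $(\sum_j\|\bm{b}_j\|_2^2\|\bm{a}_j\|_2^2)^2 \leq (\sum_j\|\bm{b}_j\|_2^4)(\sum_j\|\bm{a}_j\|_2^4)$ together with $\sqrt{XY}\leq(X+Y)/2$, which give $2\sum_j\|\bm{b}_j\|_2^2\|\bm{a}_j\|_2^2 \leq \sum_j\|\bm{b}_j\|_2^4 + \sum_j\|\bm{a}_j\|_2^4$; the missing $2\sum_j\|\bm{b}_j\|_2^2\|\bm{a}_j\|_2^2$ is thereby absorbed into the $3\sum_j\|\bm{b}_j\|_2^4 + 3\sum_j\|\bm{a}_j\|_2^4$ slack already present in the asserted $L$. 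The remaining four coefficient comparisons are immediate consequences of the stated sum being at least each of its nonnegative summands.
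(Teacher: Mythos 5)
Your proposal is correct, and it establishes the theorem by the same basic mechanism as the paper: compute $\nabla^2 H(\bm{z}) = (\|\bm{z}\|_2^2+1)\bm{I}_{d_1+d_2} + 2\bm{z}\bm{z}^{\mathsf{T}}$, discard the rank-one part, and bound the quadratic form of $\nabla^2 F_1$ by $L(\|\bm{z}\|_2^2+1)\|\bm{w}\|_2^2$ via Cauchy--Schwarz. The difference lies in how the coupling between the $\bm{h}$- and $\bm{x}$-blocks is controlled. The paper writes out the block Hessian ($\bm{H}_{11}$, $\bm{H}_{12}$, $\bm{H}_{22}$) explicitly, bounds the off-diagonal block by $2\sum_j\|\bm{b}_j\|_2^2\|\bm{a}_j\|_2^2\|\bm{h}\|_2\|\bm{u}\|_2\|\bm{x}\|_2\|\bm{v}\|_2$, and then groups this cross term with part of the diagonal contributions into the perfect square $\sum_j(\|\bm{b}_j\|_2^2\|\bm{h}\|_2\|\bm{u}\|_2+\|\bm{a}_j\|_2^2\|\bm{x}\|_2\|\bm{v}\|_2)^2$, which Cauchy--Schwarz converts into the $3\|\bm{b}_j\|_2^4+3\|\bm{a}_j\|_2^4$ coefficients while leaving only coefficient $1$ on $\|\bm{b}_j\|_2^2\|\bm{a}_j\|_2^2$ (coming from the terms $\langle|\bm{Ax}|^2,|\bm{Bu}|^2\rangle$ and $\langle|\bm{Bh}|^2,|\bm{Av}|^2\rangle$). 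Your route instead exploits the scalar composite structure $\frac14\sum_j\phi_j^2$ and the identity $\bm{u}^{\mathsf{T}}\nabla^2(\tfrac14\phi_j^2)\bm{u}=2(p_j+q_j)^2+\phi_j(r_j+s_j)\leq 3\phi_j(r_j+s_j)$, which is arguably more transparent (it makes clear where the factor $3$ originates) and avoids writing any block matrices, but it yields coefficient $3$ on the cross monomial $\sum_j\|\bm{b}_j\|_2^2\|\bm{a}_j\|_2^2$; your final AM--GM absorption $2\sum_j\|\bm{b}_j\|_2^2\|\bm{a}_j\|_2^2\leq\sum_j\|\bm{b}_j\|_2^4+\sum_j\|\bm{a}_j\|_2^4$ correctly closes that gap against the stated $L$ (and for this step the detour through Cauchy--Schwarz in $j$ is unnecessary --- the termwise inequality $2\|\bm{b}_j\|_2^2\|\bm{a}_j\|_2^2\leq\|\bm{b}_j\|_2^4+\|\bm{a}_j\|_2^4$ already suffices). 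Both arguments prove exactly the stated constant; the paper's grouping is tighter at the intermediate stage, yours is cleaner to differentiate and verify.
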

\begin{proof}
We obtain the Hessian of $H$ and $F_1$ as follows:
\begin{align*}
    \nabla^2 H (\bm{z}) &=(\|\bm{z}\|^2_2+1)\bm{I}_{d_1+d_2}+2\bm{z}\bm{z}^{\mathsf{T}},\\
    \nabla^2 F_1 (\bm{h}, \bm{x}) &=
    \re\begin{bmatrix}
        \bm{H}_{11} & \bm{H}_{12}\\
        \bm{H}_{12}^{\mathsf{T}} & \bm{H}_{22}
    \end{bmatrix},
\end{align*}
where $\bm{z}=(\bm{h}, \bm{x})\in\real^{d_1+d_2}$ and
\begin{align*}
    \bm{H}_{11} &:= \bm{B}^{\mathsf{H}}\diag(2|\bm{Bh}|^2 + |\bm{Ax}|^2 + |\bm{y}|^2)\bm{B}+ \bm{B}^{\mathsf{H}}\diag((\bm{Bh})^2)\overline{\bm{B}},\\
   \bm{H}_{12} &:= \bm{B}^{\mathsf{H}}\diag(\bm{Bh}\odot\bm{Ax})\overline{\bm{A}}+\bm{B}^{\mathsf{H}}\diag(\bm{Bh}\odot\overline{\bm{Ax}})\bm{A},\\
    \bm{H}_{22} &:=\bm{A}^{\mathsf{H}}\diag(|\bm{Bh}|^2 + 2|\bm{Ax}|^2 + \bm{1})\bm{A}+ \bm{A}^{\mathsf{H}}\diag((\bm{Ax})^2)\overline{\bm{A}}.
\end{align*}
Since the sum of a complex number and its conjugate is real, $\nabla^2 F_1$ is real.
To prove the convexity of $LH-F_1$,
it is sufficient to show that $\langle\bm{w}, \nabla^2 F_1(\bm{h}, \bm{x})\bm{w}\rangle\leq L\langle\bm{w},\nabla^2 H(\bm{h}, \bm{x})\bm{w}\rangle$ for any $\bm{w}\in\real^{d_1+d_2}$.
Let separate $\bm{w}=(\bm{u}, \bm{v})$ using $\bm{u}\in\real^{d_1}$ and $\bm{v}\in\real^{d_2}$.
We obtain $\langle\bm{w}, \nabla^2 H(\bm{z})\bm{w}\rangle= (\|\bm{z}\|^2_2 + 1)\|\bm{w}\|^2_2 +2\langle\bm{z}, \bm{w}\rangle^2$ and
\begin{align*}
    \langle \bm{w}, \nabla^2 F_1 (\bm{h}, \bm{x})\bm{w} \rangle
    &= \re\langle\bm{u}, \bm{H}_{11}\bm{u}\rangle+ \re\langle\bm{v}, \bm{H}_{22}\bm{v}\rangle +2\re\langle\bm{u}, \bm{H}_{12}\bm{v}\rangle.
\end{align*}
Each term of $\langle \bm{w}, \nabla^2 F_1(\bm{h}, \bm{x})\bm{w} \rangle$ is bounded as follows:
\begin{align*}
    \re\langle\bm{u}, \bm{H}_{11}\bm{u}\rangle
    &=\langle2|\bm{Bh}|^2 + |\bm{Ax}|^2 + |\bm{y}|^2, |\bm{Bu}|^2\rangle+\re\langle(\bm{Bu})^2, (\bm{Bh})^2\rangle\\
    &\leq\langle|\bm{Bh}|^2 + |\bm{Ax}|^2 + |\bm{y}|^2, |\bm{Bu}|^2\rangle+2\langle|\bm{Bh}|^2, |\bm{Bu}|^2\rangle,\\
    \re\langle\bm{v}, \bm{H}_{22}\bm{v}\rangle
    &=\left\langle|\bm{Bh}|^2 + 2|\bm{Ax}|^2 + \bm{1}, |\bm{Av}|^2\right\rangle+\re\langle(\bm{Av})^2, (\bm{Ax})^2\rangle\\
    &\leq\left\langle|\bm{Bh}|^2 + |\bm{Ax}|^2 + \bm{1}, |\bm{Av}|^2\right\rangle+2\left\langle|\bm{Ax}|^2, |\bm{Av}|^2\right\rangle,\\
    \re\langle\bm{u}, \bm{H}_{12}\bm{v}\rangle
    &=\re\langle\bm{u}, \bm{B}^{\mathsf{H}}\diag(\bm{Bh}\odot\bm{Ax})\overline{\bm{A}}\bm{v}\rangle + \re\langle\bm{u}, \bm{B}^{\mathsf{H}}\diag(\bm{Bh}\odot\overline{\bm{A}\bm{x}})\bm{A}\bm{v}\rangle\\
    &=\re\langle\bm{Bu}\odot\bm{Av}, \bm{Bh}\odot\bm{Ax}\rangle + \re\langle\bm{B}\bm{u}\odot\overline{\bm{A}\bm{v}}, \bm{Bh}\odot\overline{\bm{A}\bm{x}}\rangle\\
    &=\re\sum_{j=1}^m\overline{\langle\bm{b}_j, \bm{u}\rangle\langle\bm{a}_j, \bm{v}\rangle}\langle\bm{b}_j, \bm{h}\rangle\langle\bm{a}_j, \bm{x}\rangle+\re\sum_{j=1}^m\overline{\langle\bm{b}_j, \bm{u}\rangle}\langle\bm{a}_j, \bm{v}\rangle\langle\bm{b}_j, \bm{h}\rangle\overline{\langle\bm{a}_j, \bm{x}\rangle}\\
    &\leq2\sum_{j=1}^m\|\bm{b}_j\|^2_2\|\bm{a}_j\|^2_2\|\bm{h}\|_2\|\bm{u}\|_2\|\bm{x}\|_2\|\bm{v}\|_2,
\end{align*}
where all the inequalities hold by $\re(\cdot)\leq|\cdot|$, and the last inequality holds by the Cauchy-Schwarz inequality.
From the above relation, we obtain
\begin{align*}
    \langle|\bm{Bh}|^2&, |\bm{Bu}|^2\rangle + \langle|\bm{Ax}|^2,|\bm{Av}|^2\rangle + \re\langle\bm{u}, \bm{H}_{12}\bm{v}\rangle\\
    &\leq\sum_{j=1}^m\big(\|\bm{b}_j\|^4_2\|\bm{h}\|^2_2\|\bm{u}\|^2_2+\|\bm{a}_j\|^4_2\|\bm{x}\|^2_2\|\bm{v}\|^2_2 + 2\|\bm{b}_j\|^2_2\|\bm{a}_j\|^2_2\|\bm{h}\|_2\|\bm{u}\|_2\|\bm{x}\|_2\|\bm{v}\|_2\big)\\
    &=\sum_{j=1}^m\big(\|\bm{b}_j\|^2_2\|\bm{h}\|_2\|\bm{u}\|_2+\|\bm{a}_j\|^2_2\|\bm{x}\|_2\|\bm{v}\|_2\big)^2\\
    &\leq\sum_{j=1}^m\big(\|\bm{b}_j\|^4_2+\|\bm{a}_j\|^4_2\big)\big(\|\bm{h}\|^2_2\|\bm{u}\|^2_2+\|\bm{x}\|^2_2\|\bm{v}\|^2_2\big),
\end{align*}
where both inequalities hold by the Cauchy-Schwarz inequality.
Thus, we obtain
\begin{align*}
    &\langle \bm{w}, \nabla^2 F_1(\bm{h}, \bm{x})\bm{w} \rangle\\
    &\leq\langle|\bm{Bh}|^2 + |\bm{Ax}|^2 + |\bm{y}|^2, |\bm{Bu}|^2\rangle+\langle|\bm{Bh}|^2 + |\bm{Ax}|^2 + \bm{1}, |\bm{Av}|^2\rangle\\
    &\quad+2\langle|\bm{Bh}|^2, |\bm{Bu}|^2\rangle+2\langle|\bm{Ax}|^2, |\bm{Av}|^2\rangle+2\re\langle\bm{u}, \bm{H}_{12}\bm{v}\rangle\\
    &\leq\sum_{j=1}^m\big(\|\bm{b}_j\|^2_2\|\bm{u}\|^2_2(\|\bm{b}_j\|^2_2\|\bm{h}\|^2_2+ \|\bm{a}_j\|^2_2\|\bm{x}\|^2_2+|y_j|^2)\\
    &\quad+\|\bm{a}_j\|^2_2\|\bm{v}\|^2_2(\|\bm{b}_j\|^2_2\|\bm{h}\|^2_2 + \|\bm{a}_j\|^2_2\|\bm{x}\|^2_2+1)\\
    &\quad+2(\|\bm{b}_j\|^4_2+\|\bm{a}_j\|^4_2)(\|\bm{h}\|^2_2\|\bm{u}\|^2_2+\|\bm{x}\|^2_2\|\bm{v}\|^2_2)\big)\\
    &\leq\sum_{j=1}^m(3\|\bm{b}_j\|^4_2+3\|\bm{a}_j\|^4_2+\|\bm{b}_j\|^2_2\|\bm{a}_j\|^2_2
    +|y_j|^2\|\bm{b}_j\|^2_2+\|\bm{a}_j\|^2_2)(\|\bm{z}\|^2_2 + 1)\|\bm{w}\|^2_2\\
    &\leq L\langle\bm{w}, \nabla^2 H(\bm{h}, \bm{x})\bm{w}\rangle,
\end{align*}
which proves $L H - F_1$ is convex.
\end{proof}
From Theorem~\ref{theo:l-smad-para}, we obtain the following.
\begin{corollary}
\label{coro:l-smad}
Let $H_C\in\mathcal{G}(C)$ be defined by 
$H_C(\bm{z}) = \frac{1}{4}\|\bm{z}\|^4_2 + \frac{1}{2}\|\bm{z}\|^2_2 +\delta_C(\bm{z})$,
where $\delta_C(\bm{z})$ is the indicator function $\delta_C(\bm{z}) = 0$ for $\bm{z} \in C$ and $\delta_C(\bm{z}) = \infty$ otherwise.
For any $L$ satisfying~\eqref{ineq:l-smad-blind-deconv-dc}, the pair $(F_1, H_C)$ is $L$-smad on $C$.
\end{corollary}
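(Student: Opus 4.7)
The plan is to deduce the corollary directly from Theorem~\ref{theo:l-smad-para} by noting that $H_C$ is simply the convex polynomial kernel $H$ of~\eqref{def:kernel-blind-dc} augmented with the indicator of the open convex set $C$. This reduces the claim to two ingredients: (a) a routine verification that $H_C\in\mathcal{G}(C)$ and that $F_1$ meets the companion condition in Definition~\ref{def:l-smad}, and (b) the two-sided convex inequality in Definition~\ref{def:l-smad}, which is essentially already proven in the theorem.

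For (a), I would check Definition~\ref{def:kernel} item by item. Properness, convexity, and lower semicontinuity of $H_C$ follow from those of $H$ and $\delta_C$; by construction $\dom H_C \subset \cl C$; on $C=\interior\dom H_C$ the indicator vanishes so $H_C$ agrees with the polynomial $H$, which gives $\mathcal{C}^1$ smoothness there and $\partial H_C(\bm{z})=\{\nabla H(\bm{z})\}$ for $\bm{z}\in C$, hence $\dom\partial H_C=C$. Condition (ii) of Definition~\ref{def:l-smad} for $F_1$ is immediate since $F_1$ is a polynomial on $\real^{d_1+d_2}$, hence proper, lower semicontinuous, and $\mathcal{C}^1$ everywhere, with $\dom F_1\supset\dom H_C$.

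For (b), I need to verify that $LH_C\pm F_1$ are convex on $C$. Convexity of $LH_C+F_1$ is immediate: $LH_C$ is convex as the sum of $LH$ and an indicator of a convex set, and convexity of $F_1$ was established just after~\eqref{prob:blind-deconv-dc}. For the minus sign, I would write
\begin{equation*}
LH_C - F_1 = (LH - F_1) + L\delta_C
\end{equation*}
and invoke Theorem~\ref{theo:l-smad-para}, which asserts that $LH-F_1$ is convex on $\real^{d_1+d_2}$ whenever~\eqref{ineq:l-smad-blind-deconv-dc} holds; adding the convex indicator $L\delta_C$ preserves convexity on $C$. The closest thing to an obstacle is the domain bookkeeping around $\dom\partial H_C = C$, which rests on the standard identification of the limiting subdifferential on and off $\dom H_C$; given the smoothness of $H$ this is routine, and no serious new estimate beyond Theorem~\ref{theo:l-smad-para} is required.
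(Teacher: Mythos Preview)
Your proposal is correct and follows essentially the same route as the paper: both reduce the claim to Theorem~\ref{theo:l-smad-para} by observing that $H_C$ coincides with the polynomial kernel $H$ on the open set $C$, so the convexity of $LH-F_1$ established there carries over to $LH_C-F_1$. Your write-up is in fact more thorough than the paper's two-line proof, which does not explicitly verify $H_C\in\mathcal{G}(C)$ or the $LH_C+F_1$ direction, whereas you spell out both.
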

\begin{proof}
Because $C$ is an open set, $H_C$ and $\nabla^2 H_C$ is the same to these of $H$ given by~\eqref{def:kernel-blind-dc} on $C$.
From the convexity of $C$ and Theorem~\ref{theo:l-smad-para}, the pair $(F_1, H_C)$ is $L$-smad on $C$.
\end{proof}
From Corollary~\ref{coro:l-smad}, we can immediately prove the following corollary by using~\cite[Theorems 2 and 7]{takahashi21}.
\begin{corollary}
\label{coro:convergence}
Let $\{\bm{z}^k\}$ be a sequence generated by BPDCA(e) with $0 < \lambda L < 1$ for~\eqref{prob:blind-deconv-dc}.
For BPDCAe, assume $G$ is convex.
Then, $\{\bm{z}^k\}$ converges to a stationary point of~\eqref{prob:blind-deconv-dc}.
\end{corollary}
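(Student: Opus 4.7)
The plan is to reduce the statement directly to the convergence results for BPDCA and BPDCAe from~\cite[Theorems 2 and 7]{takahashi21} by verifying that all of the structural hypotheses those theorems require are already at hand from the earlier material in this section.

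First, I would list the hypotheses needed by the abstract convergence theorems of~\cite{takahashi21}: (i) the objective is the sum of a DC function and a proper lower semicontinuous regularizer; (ii) the convex part that is linearized in the Bregman step is paired with a kernel generating distance in an $L$-smad relationship on the interior of the constraint; (iii) the stepsize satisfies $0 < \lambda L < 1$; and, in the extrapolated case, (iv) the regularizer is convex so that the extrapolation analysis applies. I would then read these off one by one from the present paper: the decomposition $F = F_1 - F_2$ with both pieces convex is established in Section~\ref{sec:dc-optimization}; the kernel $H_C \in \mathcal{G}(C)$ defined in Corollary~\ref{coro:l-smad} is a legitimate kernel generating distance on $C$; and that same corollary, which is the main technical input, supplies the $L$-smad property of the pair $(F_1, H_C)$ for every $L$ satisfying~\eqref{ineq:l-smad-blind-deconv-dc}. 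The stepsize condition is exactly the one imposed on the input of Algorithms~\ref{alg:bpdca} and~\ref{alg:bpdcae}, and for BPDCAe we simply carry the convexity assumption on $G$ from the hypothesis of the corollary.

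With these ingredients verified, I would invoke~\cite[Theorem 2]{takahashi21} for BPDCA and~\cite[Theorem 7]{takahashi21} for BPDCAe to conclude that the generated sequence $\{\bm{z}^k\}$ is well-defined, that successive Bregman distances are summable, and that every limit point is a stationary point of~\eqref{prob:blind-deconv-dc}; full sequential convergence then follows in the standard way from those theorems. Nothing further is required, because the DC structure of the quartic fidelity, the $L$-smad pairing, and the stepsize rule are precisely the three inputs the cited theorems consume.

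The main obstacle, such as it is, is purely bookkeeping: one must be careful that the form of $H_C$ with an indicator function added, rather than the smooth $H$ of Theorem~\ref{theo:l-smad-para}, still satisfies Definition~\ref{def:kernel} on $C$ and that the $L$-smad inequality proved on all of $\real^{d_1+d_2}$ restricts correctly to $C = \interior\dom H_C$. Both points are already addressed in the proof of Corollary~\ref{coro:l-smad}, so the remainder of the argument is a direct citation.
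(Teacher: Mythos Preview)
Your proposal is correct and follows essentially the same approach as the paper: verify the $L$-smad hypothesis via Corollary~\ref{coro:l-smad} and then invoke~\cite[Theorems~2 and~7]{takahashi21} directly. The paper's own proof is even terser, but the substance is identical.
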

Note that a stationary point of~\eqref{prob:blind-deconv-dc} is defined as a point $\bm{z} \in C$ satisfying $-\nabla F_1(\bm{z}) + \nabla F_2(\bm{z})\in\partial G(\bm{z})$.
Note that under the convexity of $G$, it is theoretically guaranteed that a sequence generated by FISTA would converge to the stationary point if $F$ had a Lipschitz continuous gradient.
In our problem, the convergence of FISTA is not theoretically guaranteed because $F$ does not have it.
Although FISTA is not applicable, the convergent points of BPDCA(e) and FISTA share the same stationary points in theory.

\begin{remark}
An appropriate value of $\lambda$ can be evaluated by backtracking, i.e., decrease $\lambda$ until $F_1(\bm{z}^{k+1}) - F_1(\bm{z}^k) - \langle\nabla F_1(\bm{z}^k), \bm{z}^{k+1} - \bm{z}^k\rangle \leq \frac{1}{\lambda}D_H(\bm{z}^{k+1}, \bm{z}^k)$ is satisfied, because, if this inequality holds, the theoretical convergence is guaranteed.
For $n_k$ backtracking procedures at the $k$th iteration, we need one calculation of $\nabla F_1(\bm{z}^k)$ and $n_k$ calculations of $F_1(\bm{z}^{k+1})$.
There calculations are sometimes expensive.
Thus, we did not use the backtracking.

\end{remark}

\subsection{Stability Analysis}
\label{sec:stability}
We show the stability analysis of BPDCA(e), FISTA, and the alternating minimization (AM)~\cite{chan00,krishnan11,perrone14} around the equilibrium points, which are the fixed points of the update formula of each algorithm.
Here, we define $\Phi(\bm{z}) = G(\bm{z}) + \langle\nabla F(\bm{z}^k), \bm{z}\rangle + \frac{1}{\lambda}D_H(\bm{z}, \bm{z}^k)$.
Then, the first-order condition $\bm{0}\in\partial\Phi(\bm{z}^{k+1}) = \partial G(\bm{z}^{k+1}) + \nabla F(\bm{z}^k) + \frac{1}{\lambda}\left(\nabla H(\bm{z}^{k+1}) - \nabla H(\bm{z}^k)\right)$ is approximated as follows:
\begin{align*}
    \bm{0}\simeq \bm{\zeta}^{k+1} + \nabla F(\bm{z}^k) + \frac{1}{\lambda}\nabla^2 H(\bm{z}^k)(\bm{z}^{k+1} - \bm{z}^k),
\end{align*}
where $\bm{\zeta}^{k+1}\in\partial G(\bm{z}^{k+1})$.
Note that $\nabla^2 H(\bm{z}^k)$ is regular.
In fact, its inverse is explicitly written as
\begin{align*}
\nabla^2 H(\bm{z})^{-1} = \frac{1}{\|\bm{z}\|^2_2+1}\left(\bm{I}_{d_1+d_2} - \frac{2\bm{z}\bm{z}^{\mathsf{T}}}{3\|\bm{z}\|^2_2+1}\right).
\end{align*}
By multiplying it, we obtain
\begin{align*}
    % \label{approx:bpdca}
    \bm{z}^{k+1} - \bm{z}^k \simeq -\lambda\nabla^2 H(\bm{z}^k)^{-1}(\bm{\zeta}^{k+1} + \nabla F(\bm{z}^k)),
\end{align*}
which indicates that $\bm{z}^{k+1} - \bm{z}^k$ is greatly affected by $\nabla^2 H(\bm{z})^{-1}$.
Thus, $H$ is important for the performance of BPDCA.
For BPDCAe, this fact is also true by substituting $\bm{w}^k$ for $\bm{z}^k$.

To simplify the stability analysis of FISTA, we consider ISTA, which is FISTA without extrapolation.
Setting $H(\bm{z}) = \frac{1}{2}\|\bm{z}\|^2_2 =: H_1(\bm{z})$, \ie, $\nabla^2 H_1(\bm{z}) = \bm{I}_{d_1+d_2}$, we obtain
\begin{align*}
    \bm{z}^{k+1} - \bm{z}^k \simeq -\lambda (\bm{\zeta}^{k+1} + \nabla F(\bm{z}^k)).
\end{align*}
Since $F$ does not have a Lipschitz continuous gradient, $\lambda$ is close to $0$, \ie, $\bm{z}^k \simeq \bm{z}^{k+1}$.
This implies that convergence of (F)ISTA is slow.

When $G(\bm{x},\bm{h})$ is convex,~\eqref{prob:blind-deconv} is convex with respect to $\bm{h}$ for fixed $\bm{x}$ and vice versa. 
AM is a method to update $\bm{x}$ and $\bm{h}$ alternately, which will be easily implemented and converge. 
The first-order conditions around the equilibrium points are
\begin{align*}
    \bm{0} &\in \partial_{\bm{h}} G(\bm{h}^{k+1}, \bm{x}^{k}) + \nabla_{\bm{h}} F(\bm{h}^{k+1}, \bm{x}^k)\\
    &\simeq \partial_{\bm{h}} G(\bm{h}^{k+1}, \bm{x}^{k}) + \nabla_{\bm{h}} F(\bm{h}^{k}, \bm{x}^k) + \nabla^2_{\bm{hh}} F(\bm{h}^k, \bm{x}^k)(\bm{h}^{k+1} - \bm{h}^k),\\
    \bm{0} &\in \partial_{\bm{x}} G(\bm{h}^{k+1}, \bm{x}^{k+1}) + \nabla_{\bm{x}} F(\bm{h}^{k+1}, \bm{x}^{k+1})\\
    &\simeq \partial_{\bm{x}} G(\bm{h}^{k+1}, \bm{x}^{k+1}) + \nabla_{\bm{x}} F(\bm{h}^{k+1}, \bm{x}^k) + \nabla^2_{\bm{xx}} F(\bm{h}^{k}, \bm{x}^k)(\bm{x}^{k+1} - \bm{x}^k),
\end{align*}
where the last approximation holds from $\bm{h}^{k+1}\simeq\bm{h}^k$ in $\nabla^2_{\bm{xx}} F(\bm{h}^{k}, \bm{x}^k)$.
Assuming $\nabla^2_{\bm{hh}} F(\bm{h}^k, \bm{x}^k)$ and $\nabla^2_{\bm{xx}} F(\bm{h}^{k}, \bm{x}^k)$ are regular, for $\bm{\zeta}^{k+1}_{\bm{h}}\in\partial_{\bm{h}} G(\bm{h}^{k+1}, \bm{x}^k)$ and $\bm{\zeta}^{k+1}_{\bm{x}}\in\partial_{\bm{x}} G(\bm{h}^{k+1}, \bm{x}^{k+1})$, we obtain
\begin{align*}
    \bm{h}^{k+1} - \bm{h}^k &\simeq - \nabla^2_{\bm{hh}} F(\bm{h}^k, \bm{x}^k)^{-1}(\bm{\zeta}^{k+1}_{\bm{h}} + \nabla_{\bm{h}} F(\bm{h}^k, \bm{x}^k)),\\
    \bm{x}^{k+1} - \bm{x}^k &\simeq - \nabla^2_{\bm{xx}} F(\bm{h}^{k}, \bm{x}^k)^{-1}(\bm{\zeta}^{k+1}_{\bm{x}} + \nabla_{\bm{x}} F(\bm{h}^{k+1}, \bm{x}^k)).
\end{align*}
$\nabla^2 H(\bm{z}^k)$ in the approximation of BPDCA is a block matrix that contains the cross derivative terms $\nabla^2_{\bm{hx}}H(\bm{z}^k)$ and $\nabla^2_{\bm{xh}} H(\bm{z}^k)$, while the perturbation around the equilibrium points of AM is approximated only with $\nabla^2_{\bm{xx}} F(\bm{h}^{k}, \bm{x}^k)$ and $\nabla^2_{\bm{hh}} F(\bm{h}^k, \bm{x}^k)$ regardless of the cross derivatives.
Thus, an equilibrium point of AM is not necessarily that of BPDCA.
This implies that BPDCA is not trapped at the points where AM is stuck.
On the other hand, every equilibrium point of BPDCA is an equilibrium point of AM under the Clarke regularity as we mention below.
Here, we assume that $G$ is Clarke regular at $\bm{z} = (\bm{h}, \bm{x})\in\real^{d_1+d_2}$.
Then, it holds that $\partial G(\bm{h}, \bm{x}) \subset\partial_{\bm{h}} G(\bm{h}, \bm{x})\times\partial_{\bm{x}} G(\bm{h}, \bm{x})$~\cite[Proposition 2.3.15]{clarke90}.
Hence, for an equilibrium point $\bm{z} = (\bm{h}, \bm{x})$ of BPDCA, we have
%$\bm{0} \in \nabla F (\bm{z}) + \partial G(\bm{z}) + \nabla H (\bm{z}) - \nabla H (\bm{z}) = \nabla F (\bm{z}) + \partial G(\bm{z})\subset(\nabla_{\bm{h}} F (\bm{h}, \bm{x}), \nabla_{\bm{x}} F (\bm{h}, \bm{x})) + \partial_{\bm{h}} G(\bm{h}, \bm{x})\times\partial_{\bm{x}} G(\bm{h}, \bm{x})$.
\begin{align*}
\bm{0} &\in \nabla F (\bm{z}) + \partial G(\bm{z}) + \nabla H (\bm{z}) - \nabla H (\bm{z})\\
%= \nabla F (\bm{z}) + \partial G(\bm{z})
&\subset(\nabla_{\bm{h}} F (\bm{h}, \bm{x}), \nabla_{\bm{x}} F (\bm{h}, \bm{x})) + \partial_{\bm{h}} G(\bm{h}, \bm{x})\times\partial_{\bm{x}} G(\bm{h}, \bm{x}).
\end{align*}
%It means that
Therefore, an equilibrium point
%$\bm{z}$
of BPDCA is also an equilibrium point of AM.
They are demonstrated in numerical experiments (Figure~\ref{fig:compare-recover}).

\section{Numerical Experiments}
We demonstrated efficiency of our proposed method by image deblurring via solving problem~\eqref{prob:blind-deconv}.
We set $d_1 = 2304$, $d_2 = 65536$, and $m = 262144$ and appropriately took a ground truth of $(\bm{h}^{\circ}, \bm{x}^{\circ})$. 
Using them, we generated a blurring kernel $\bm{f} = \tilde{\bm{B}}\bm{h}^{\circ}$ and an original image $\bm{g} = \tilde{\bm{A}}\bm{x}^{\circ}$, where $\tilde{\bm{B}}: \real^{d_1} \to \real^{m}$ is an operator reshaping $\bm{h} \in \real^{d_1}$ into a $\sqrt{m} \times \sqrt{m}$ image and $\tilde{\bm{A}}: \real^{d_2} \to \real^{m}$ is an inverse discrete Meyer wavelet transform operator.
Figure~\ref{fig:recover} depicts $\bm{f}$ and $\bm{g}$ used in our experiments: $\bm{f}$ corresponds to a diagonal blurring and $\bm{g}$ approximates a natural image. 
We also set $C=\{(\bm{h},\bm{x})\in\real^{d_1}\times\real^{d_2}\ |\ \bm{h}>\bm{0}, \bm{x}>\bm{0}\}$ and $G(\bm{h}, \bm{x}) = \theta \|\bm{h}\|_1$ with $\theta=0.01$ (the non-smooth $\ell_1$ regularizer) because $\bm{h}$ is supposed to be sparse in practice of image deblurring.

\begin{figure}[b]
    \centering
    \includegraphics[width=\textwidth]{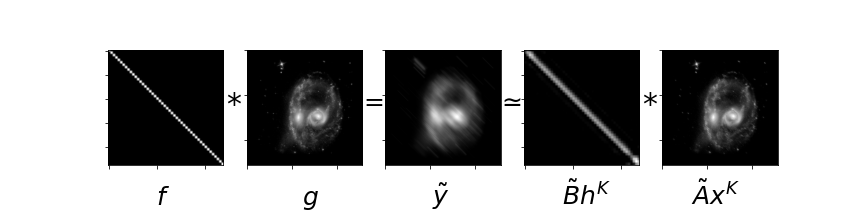}
    \caption{The ground truth $\bm{f}$ and $\bm{g}$, the blurred image $\tilde{\bm{y}}$, and $\bm{Bh}^K$ and $\bm{Ax}^K$ recovered by BPDCAe.}
    \label{fig:recover}
\end{figure}

\subsection{Comparison of $\ell_1$ and $\ell_2$ Regularization}
We solved problem~\eqref{prob:blind-deconv} corresponding to the setting above with BPDCA(e), FISTA, and AM.
For BPDCA(e), we adjusted $L$ which satisfies~\eqref{ineq:l-smad-blind-deconv-dc} and used it as a fixed step size.
Step sizes in all iterations of FISTA were obtained by backtracking.
Note that the subproblems of BPDCA(e) and FISTA (without backtracking procedures) are solved in closed-form formulae, whose computational cost is almost the same.
The maximum number of iterations for BPDCA(e) and FISTA was 30000, and that for AM was 3000 because the subproblems of AM were solved approximately by 10 iterations of FISTA at each iteration.
For all methods, the initial points $\bm{h}^0$ and $\bm{x}^0$ are set to be the left and right singular vectors corresponding to the leading singular value of $\bm{B}^{\mathsf{H}}\diag(\bm{y})\overline{\bm{A}}$, respectively, which is proposed in~\cite{li19}.
The difference between the objective value at each iteration and that at the ground truth $\{\log_{10}|\Psi(\bm{h}^k,\bm{x}^k) - \Psi^{\circ}|\}$ is plotted in Figure~\ref{fig:objective-log-diff} in log-scale, where we denoted $\Psi = F + G$ and $\Psi^{\circ} := \Psi(\bm{h}^{\circ}, \bm{x}^{\circ})$.
Figure~\ref{fig:cos-sim-h} shows the cosine similarity between $\bm{h}^k$ and $\bm{h}^{\circ}$ defined by $\{\cossim (\bm{h}^k, \bm{h}^{\circ})(:=\langle\bm{h}^k, \bm{h}^{\circ}\rangle/(\|\bm{h}^k\|_2\|\bm{h}^{\circ}\|_2))\}$, and Figure~\ref{fig:cos-sim-x} shows that $\cossim (\bm{x}^k, \bm{x}^{\circ})$.
As we can see from Figures~\ref{fig:objective-log-diff} and~\ref{fig:cos-sim}, BPDCAe outperformed the other algorithms.
Its convergence was the fastest, and $\Psi(\bm{h}^K, \bm{x}^K)$, $\cossim (\bm{h}^K, \bm{h}^{\circ})$, and $\cossim (\bm{x}^K, \bm{x}^{\circ})$ were also the best, where $K := 30000$ (for BPDCA(e) and FISTA) or $K := 3000$ (for AM).
Figure~\ref{fig:compare-recover} shows the recovered images.
Figure~\ref{fig:bpdcae} shows that there is almost no difference between $\tilde{\bm{A}}\bm{x}^{\circ}$ and $\tilde{\bm{A}}\bm{x}^k$, while $\bm{h}^k$ was not completely recovered.
Figures~\ref{fig:bpdcae} and~\ref{fig:am} show that the sequences generated by AM converged to a different stationary point (see also Subsection~\ref{sec:stability}).
\begin{figure}[t]
    \centering
    \includegraphics[width=.9\textwidth]{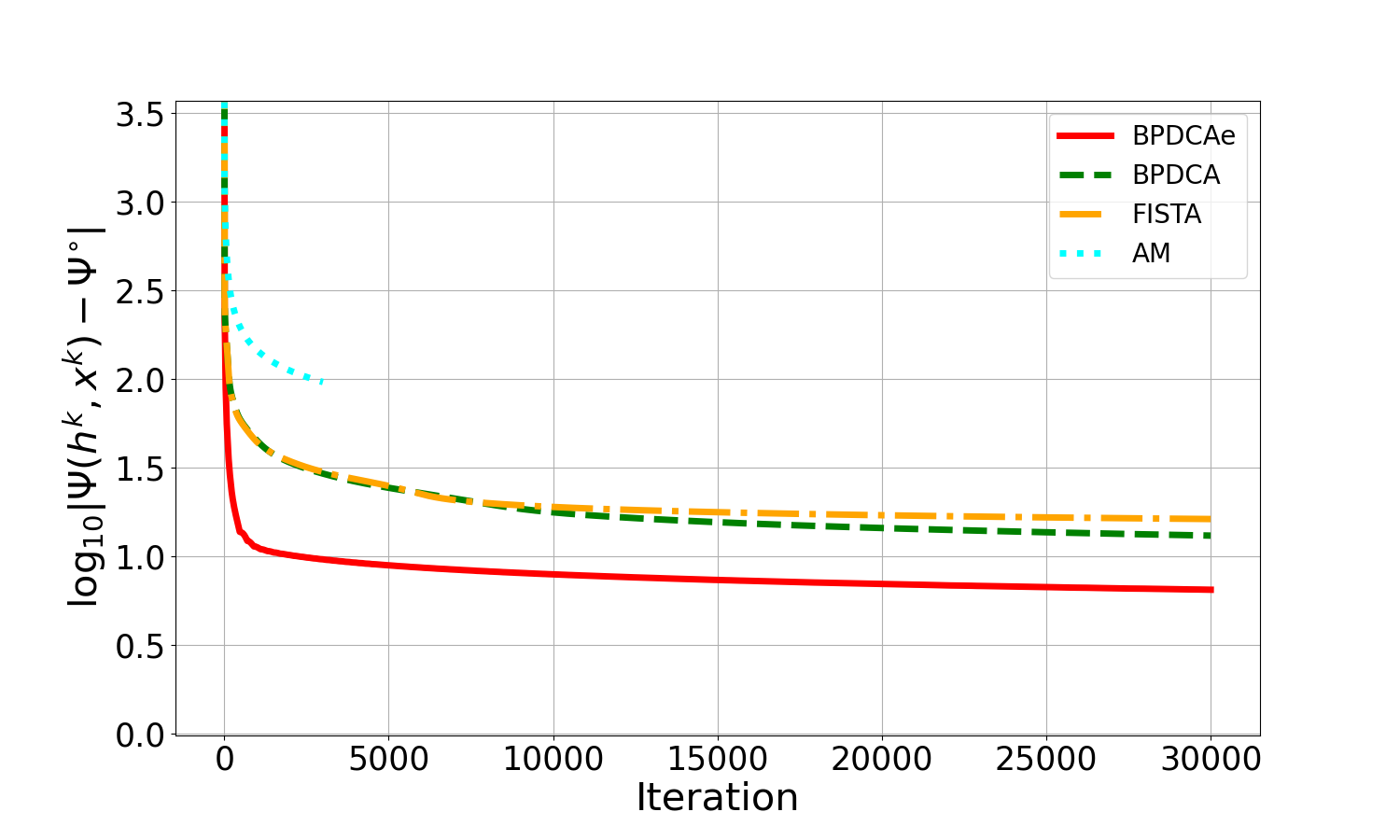}
    \caption{Plots of $\{\log_{10}|\Psi(\bm{h}^k, \bm{x}^k)-\Psi^{\circ}|\}$ at each iteration.}
    \label{fig:objective-log-diff}
\end{figure}
\begin{figure}[t]
\begin{minipage}[b]{0.49\linewidth}
    \centering
    \includegraphics[width=\textwidth]{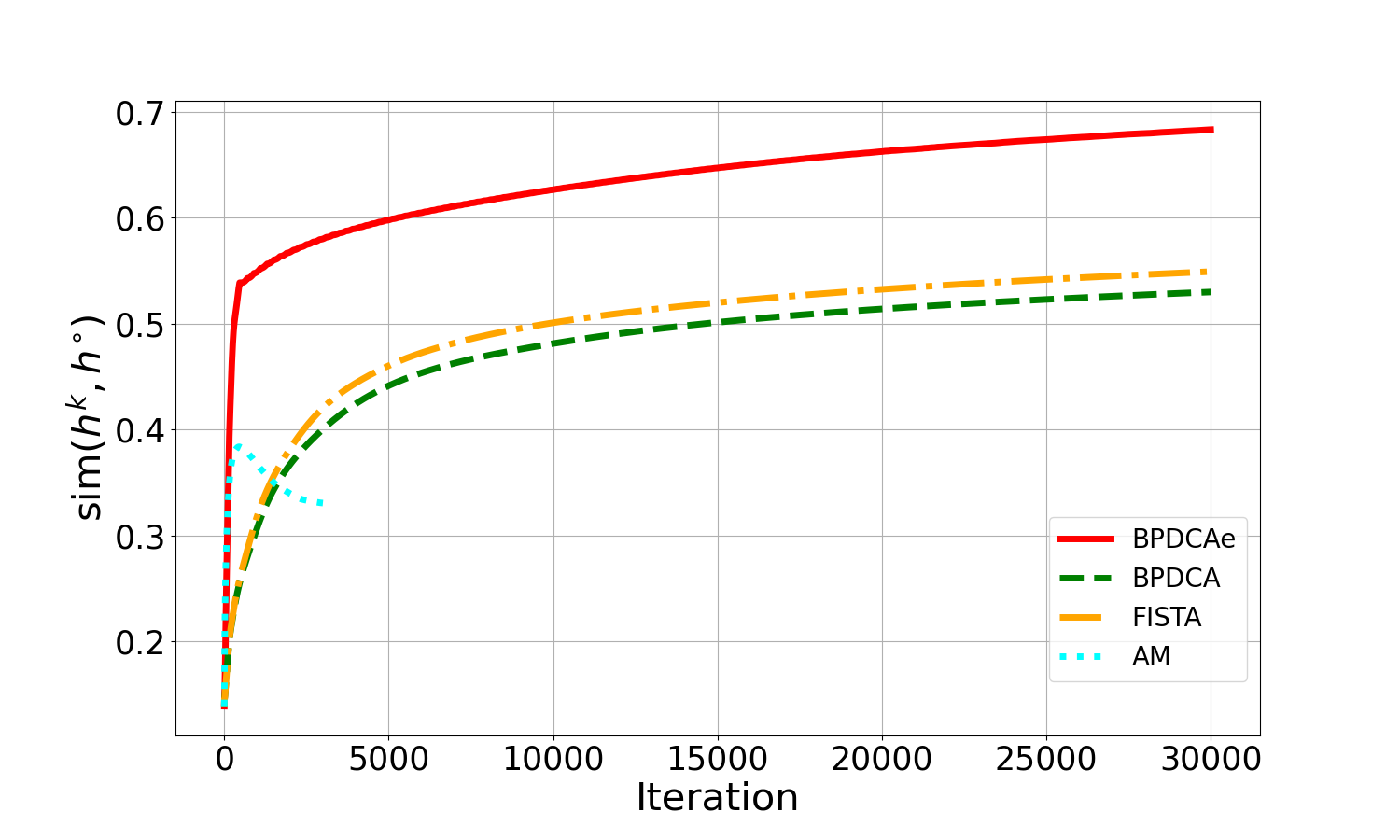}
    \subcaption{Plots of $\{\cossim (\bm{h}^k, \bm{h}^{\circ})\}$.}
    \label{fig:cos-sim-h}
\end{minipage}
\hfill
\begin{minipage}[b]{0.49\linewidth}
    \centering
    \includegraphics[width=\textwidth]{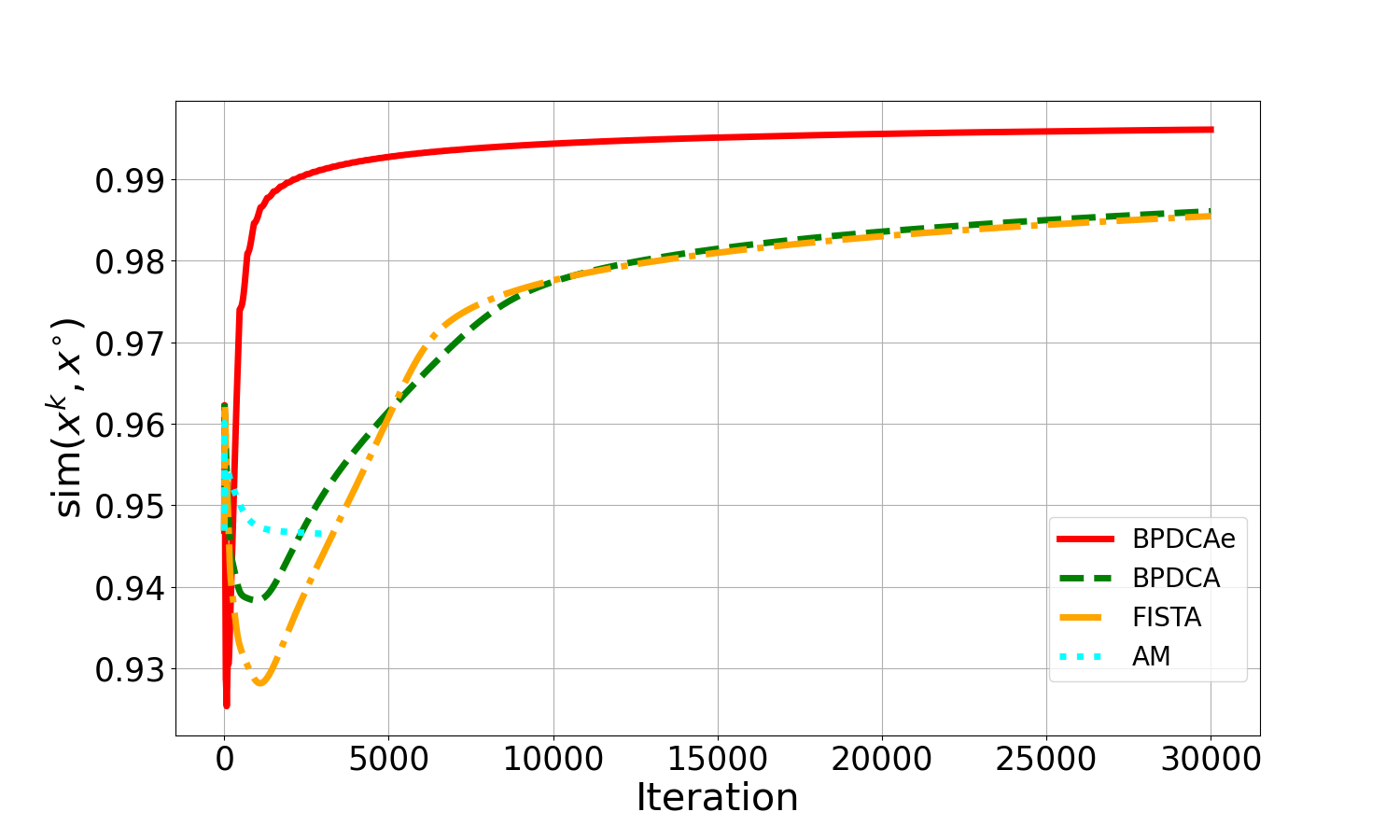}
    \subcaption{Plots of $\{\cossim (\bm{x}^k, \bm{x}^{\circ})\}$.}
    \label{fig:cos-sim-x}
\end{minipage}
\caption{Plots of the cosine similarities between the $k$th point and the ground truth.}
\label{fig:cos-sim}
\end{figure}

\begin{figure}[t]
\begin{minipage}[b]{0.16\linewidth}
    \centering
    \includegraphics[width=\textwidth]{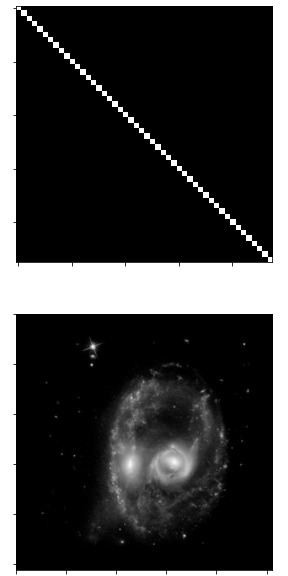}
    \subcaption{$\bm{h}^{\circ}$ and $\bm{x}^{\circ}$}
    \label{fig:ground-truth}
\end{minipage}
\hfill
\begin{minipage}[b]{0.16\linewidth}
    \centering
    \includegraphics[width=\textwidth]{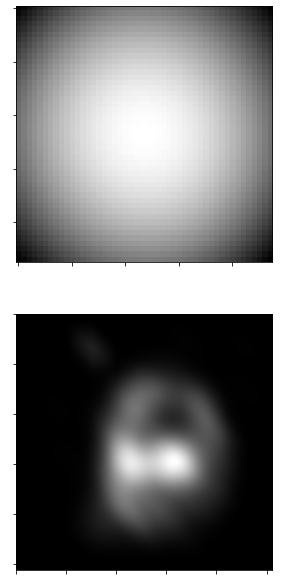}
    \subcaption{$\bm{h}^0$ and $\bm{x}^0$}
    \label{fig:init}
\end{minipage}
\hfill
\begin{minipage}[b]{0.16\linewidth}
    \centering
    \includegraphics[width=\textwidth]{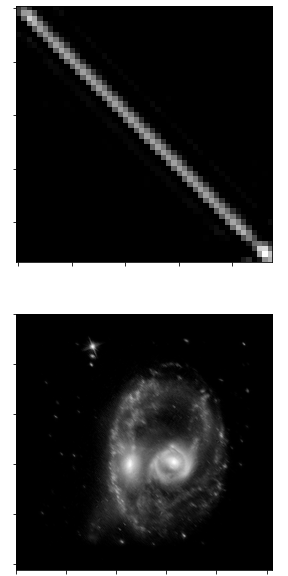}
    \subcaption{BPDCAe}
    \label{fig:bpdcae}
\end{minipage}
\hfill
\begin{minipage}[b]{0.16\linewidth}
    \centering
    \includegraphics[width=\textwidth]{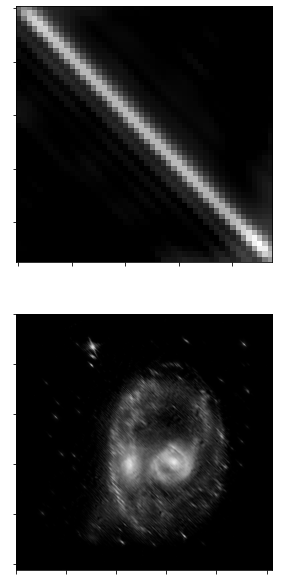}
    \subcaption{BPDCA}
    \label{fig:bpdca}
\end{minipage}
\hfill
\begin{minipage}[b]{0.16\linewidth}
    \centering
    \includegraphics[width=\textwidth]{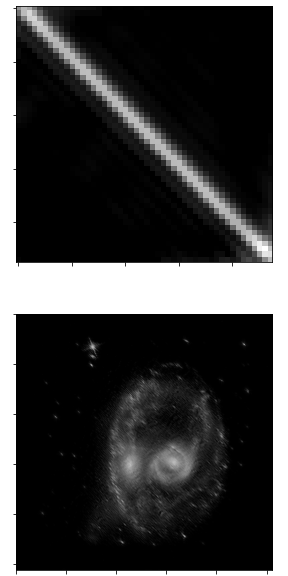}
    \subcaption{FISTA}
    \label{fig:fista}
\end{minipage}
\hfill
\begin{minipage}[b]{0.16\linewidth}
    \centering
    \includegraphics[width=\textwidth]{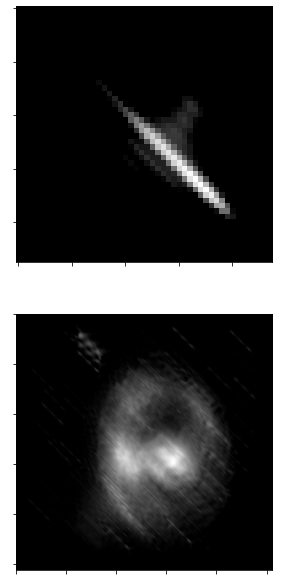}
    \subcaption{AM}
    \label{fig:am}
\end{minipage}
\caption{The upper row shows $\bm{h}^K$, and the lower row shows $\tilde{\bm{A}}\bm{x}^K$.}
\label{fig:compare-recover}
\end{figure}

We also solved the deblurring problem with the $\ell_2$ regularizer $G(\bm{h}, \bm{x}) = \theta\|\bm{h}\|_2^2$ with $\theta=0.01$.
The comparison between the results from these two regularizers are shown in Figure~\ref{fig:vs-l2}.
It shows the superiority of the non-smooth $\ell_1$ regularization over the $\ell_2$ one, which did not recover the sparse blurring kernel.

\begin{figure}[t]
\begin{minipage}[b]{0.19\linewidth}
    \centering
    \includegraphics[width=\textwidth]{recover-result-bpdcae.png}
    \subcaption{$\theta\|\bm{h}\|_1$}
    \label{fig:bpdcael1}
\end{minipage}
\hfill
\begin{minipage}[b]{0.19\linewidth}
    \centering
    \includegraphics[width=\textwidth]{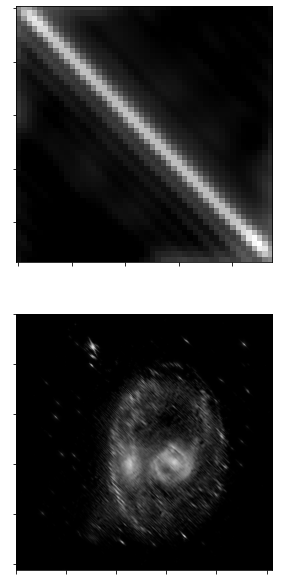}
    \subcaption{$\theta\|\bm{h}\|_2^2$}
    \label{fig:bpdcael2}
\end{minipage}
\hfill
\begin{minipage}[b]{0.19\linewidth}
    \centering
    \includegraphics[width=\textwidth]{recover-result-bpdca.png}
    \subcaption{$\theta\|\bm{h}\|_1$}
    \label{fig:bpdcal1}
\end{minipage}
\hfill
\begin{minipage}[b]{0.19\linewidth}
    \centering
    \includegraphics[width=\textwidth]{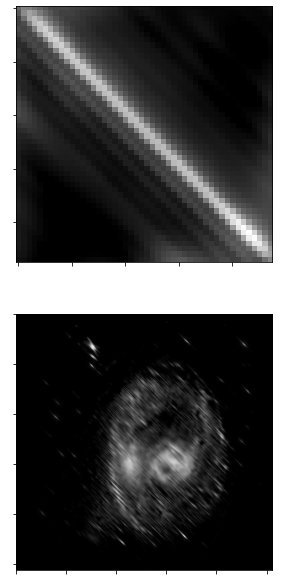}
    \subcaption{$\theta\|\bm{h}\|_2^2$}
    \label{fig:bpdcal2}
\end{minipage}
\caption{(a--b) BPDCAe and (c--d) BPDCA.}
\label{fig:vs-l2}
\end{figure}

\subsection{Comparisons under Several Situations}
\begin{figure}%[H]
    \centering
    \includegraphics[width=.9\textwidth]{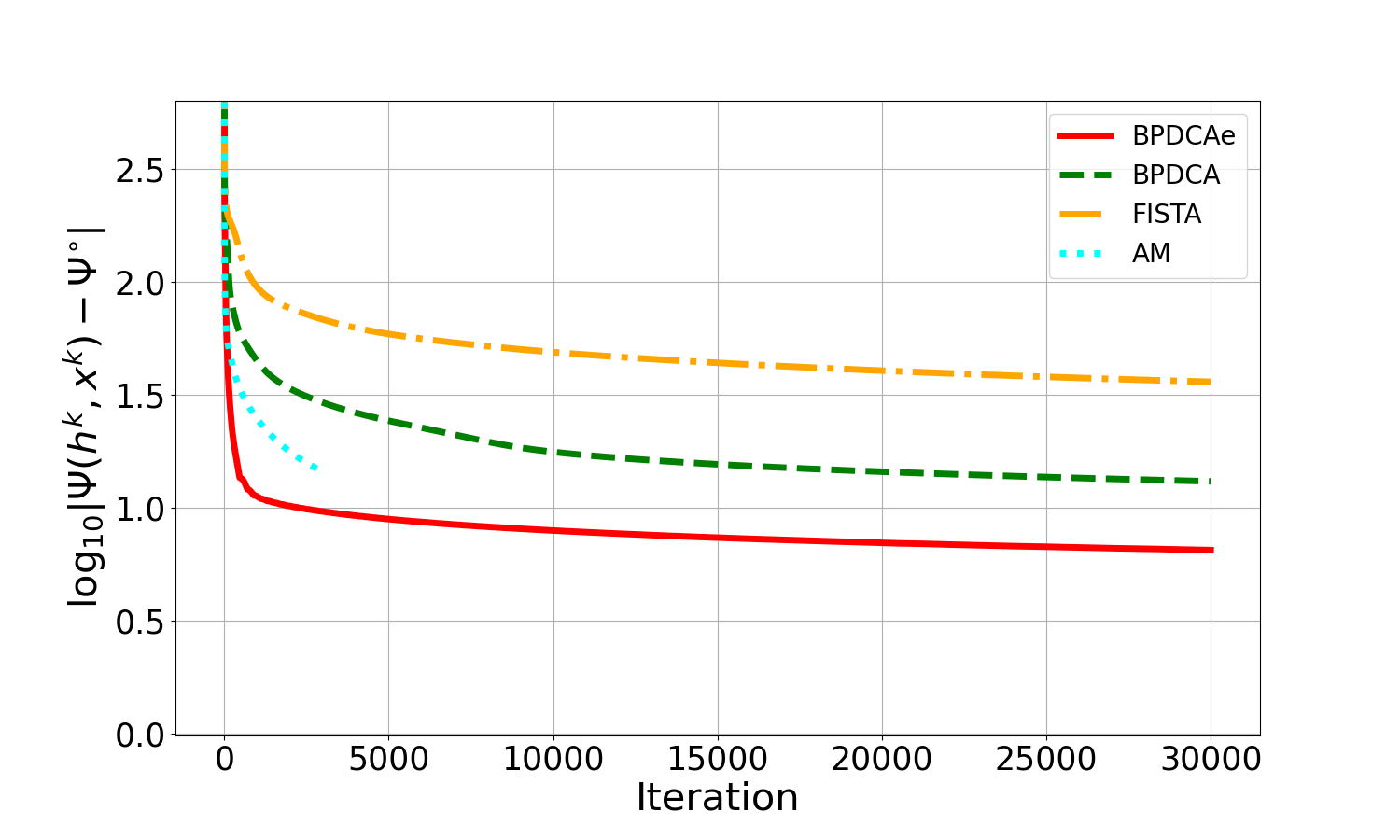}
    \caption{Plots of $\{\log_{10}|\Psi(\bm{h}^k, \bm{x}^k)-\Psi^{\circ}|\}$ when $\bm{z}^0$ is generated from the uniform distribution.}
    \label{fig:objective-log-diff-random-init}
\end{figure}

We first demonstrate the efficiency of our proposed methods is independent of the choice of the initial point.
To do so, we generated the initial points $\bm{h}^0$ and $\bm{x}^0$ from the uniform distribution on $[0, 0.1]$.
From these initial points, we generated $\{\bm{h}^k\}$ and $\{\bm{x}^k\}$ by each algorithm.
Figures~\ref{fig:objective-log-diff-random-init},~\ref{fig:cos-sim-h-random-init}, and~\ref{fig:cos-sim-x-random-init} show $\{\log_{10}|\Psi(\bm{h}^k,\bm{x}^k) - \Psi^{\circ}|\}$, $\{\cossim (\bm{h}^k, \bm{h}^{\circ})\}$, and $\{\cossim (\bm{x}^k, \bm{x}^{\circ})\}$, respectively.
Figure~\ref{fig:compare-recover-random} shows the recovered images, and Figure~\ref{fig:bpdcae-random} shows that there is almost no difference between $\tilde{\bm{A}}\bm{x}^{\circ}$ and $\tilde{\bm{A}}\bm{x}^K$ at this case.
As we can see from these figures, BPDCAe also outperformed the other algorithms even when $\bm{h}^0$ and $\bm{x}^0$ are generated from the uniform distribution.

\begin{figure}%[H]
\begin{minipage}[b]{0.49\linewidth}
    \centering
    \includegraphics[width=\textwidth]{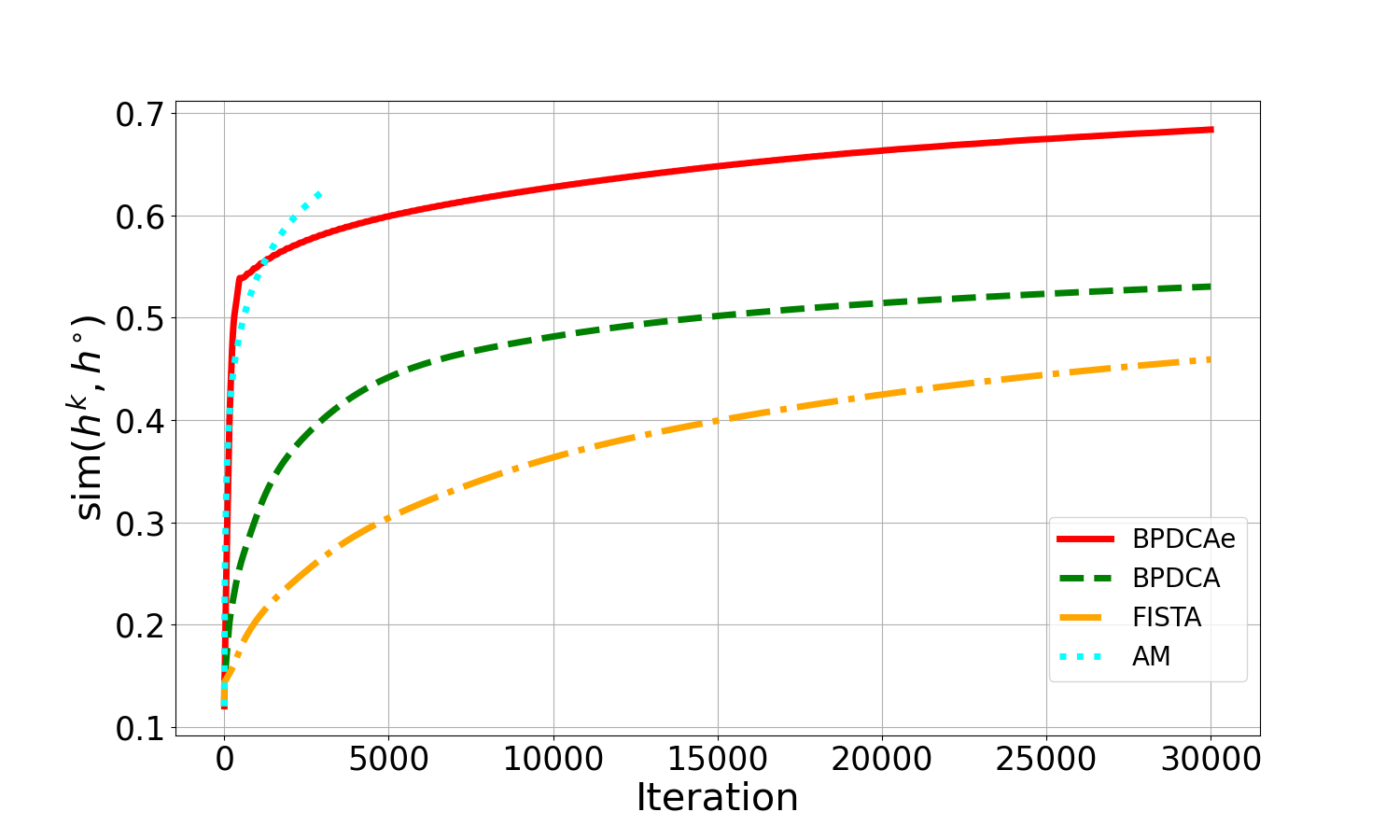}
    \subcaption{Plots of $\{\cossim (\bm{h}^k, \bm{h}^{\circ})\}$.}
    \label{fig:cos-sim-h-random-init}
\end{minipage}
\hfill
\begin{minipage}[b]{0.49\linewidth}
    \centering
    \includegraphics[width=\textwidth]{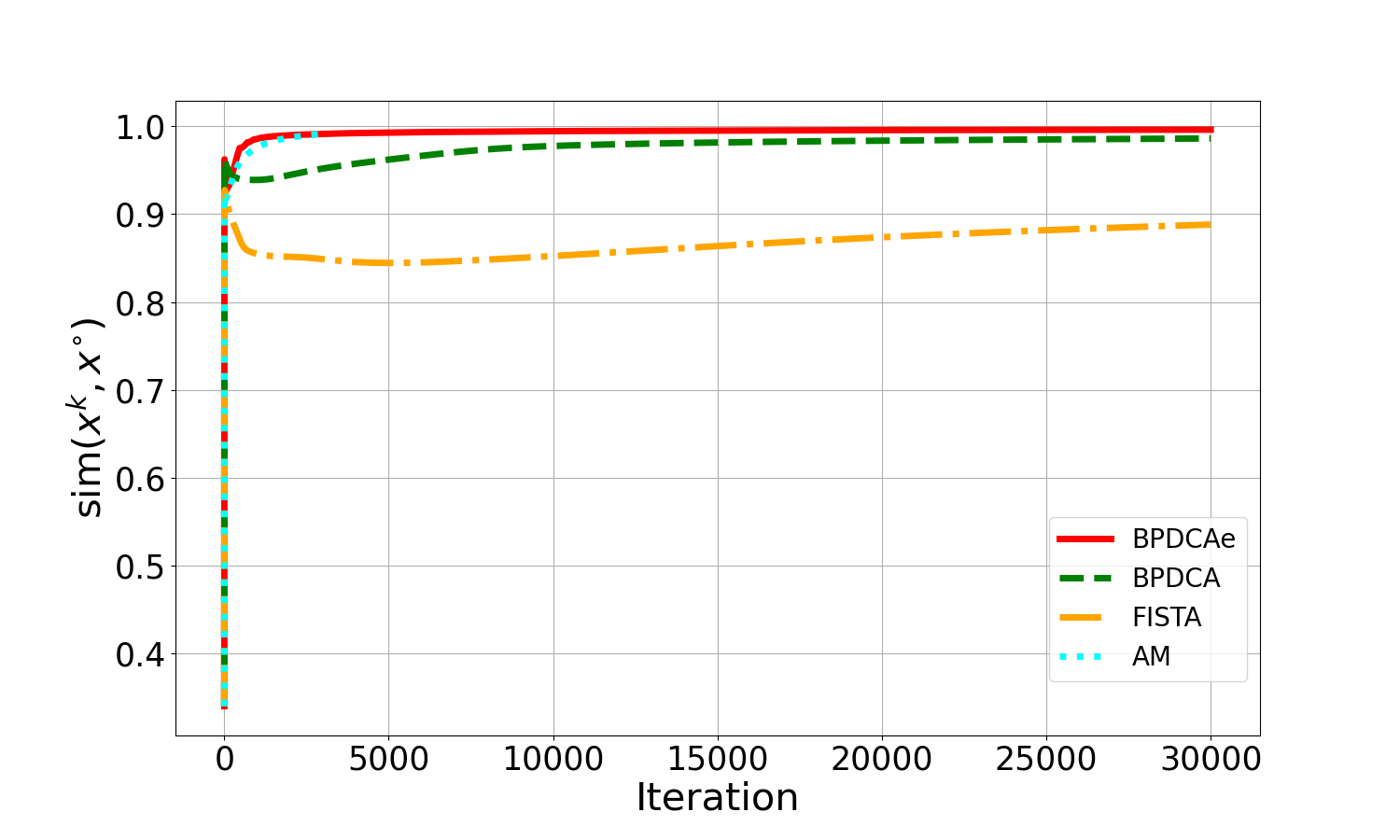}
    \subcaption{Plots of $\{\cossim (\bm{x}^k, \bm{x}^{\circ})\}$.}
    \label{fig:cos-sim-x-random-init}
\end{minipage}
\caption{Plots of the cosine similarities when $\bm{z}^0$ is generated from the uniform distribution.}
\label{fig:cos-sim-random}
\end{figure}

\begin{figure}%[H]
\begin{minipage}[b]{0.16\linewidth}
    \centering
    \includegraphics[width=\textwidth]{recover-result-gt.png}
    \subcaption{$\bm{h}^{\circ}$ and $\bm{x}^{\circ}$}
    \label{fig:gt-random-init}
\end{minipage}
\hfill
\begin{minipage}[b]{0.16\linewidth}
    \centering
    \includegraphics[width=\textwidth]{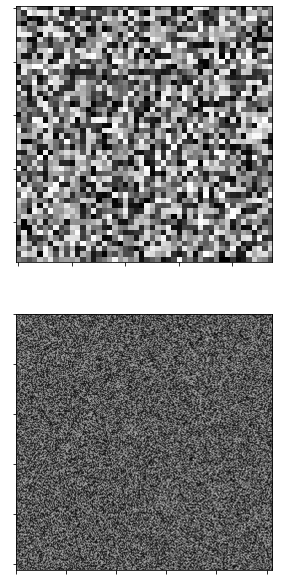}
    \subcaption{$\bm{h}^0$ and $\bm{x}^0$}
    \label{fig:random-init}
\end{minipage}
\hfill
\begin{minipage}[b]{0.16\linewidth}
    \centering
    \includegraphics[width=\textwidth]{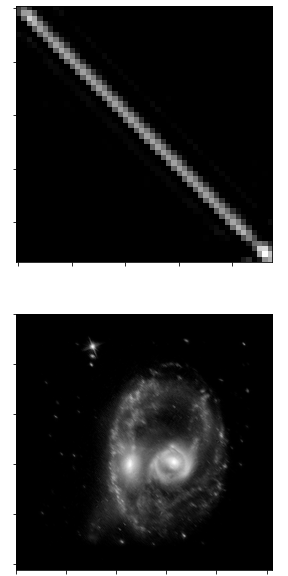}
    \subcaption{BPDCAe}
    \label{fig:bpdcae-random}
\end{minipage}
\hfill
\begin{minipage}[b]{0.16\linewidth}
    \centering
    \includegraphics[width=\textwidth]{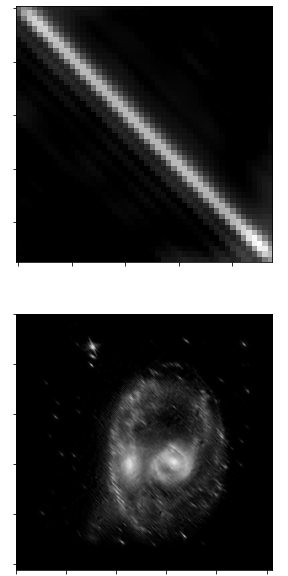}
    \subcaption{BPDCA}
    \label{fig:bpdca-random}
\end{minipage}
\hfill
\begin{minipage}[b]{0.16\linewidth}
    \centering
    \includegraphics[width=\textwidth]{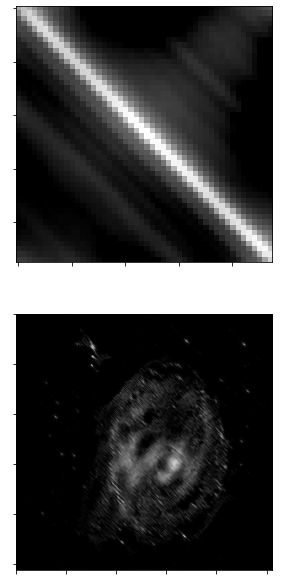}
    \subcaption{FISTA}
    \label{fig:fista-random}
\end{minipage}
\hfill
\begin{minipage}[b]{0.16\linewidth}
    \centering
    \includegraphics[width=\textwidth]{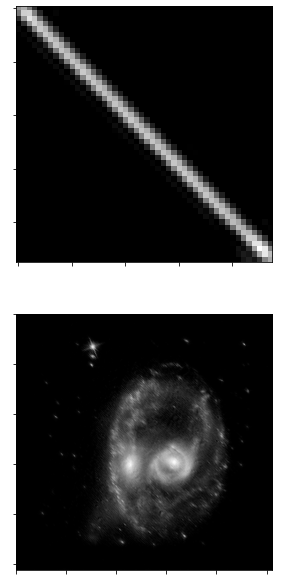}
    \subcaption{AM}
    \label{fig:am-random}
\end{minipage}
\caption{The upper row shows $\bm{h}^K$, and the lower row shows $\tilde{\bm{A}}\bm{x}^K$ when $\bm{z}^0$ is generated from the uniform distribution.}
\label{fig:compare-recover-random}
\end{figure}

We next demonstrate the efficiency of our proposed methods with the noisy data.
Here, we consider $\tilde{\bm{y}}$ containing Poisson noise, \ie, $\tilde{\bm{y}} = \bm{f} * \bm{g} + \bm{n}$, where $\bm{n}\in\real^m$ is Poisson noise (see Figure~\ref{fig:noisy-image}).
By changing noise level, \ie, the standard deviation, we solved the image deblurring problem with each algorithm.
Figure~\ref{fig:result-noise} shows the objective value $\Psi(\bm{h}^K,\bm{x}^K)$, the loss function value $F(\bm{h}^K,\bm{x}^K)$, and the cosine similarities $\cossim (\bm{h}^K, \bm{h}^{\circ})$ and $\cossim (\bm{x}^K, \bm{x}^{\circ})$, where $\bm{h}^K$ and $\bm{x}^K$ are recovered by each algorithm after $K=30000$ (for BPDCA(e) and FISTA) or $K=3000$ (for AM) iterations for each noise level.
Whereas the cosine similarity $\cossim (\bm{h}^K, \bm{h}^{\circ})$ recovered by FISTA was the best among Figure~\ref{fig:cos-sim-h-noise}, the cosine similarity $\cossim (\bm{x}^K, \bm{x}^{\circ})$ recovered by BPDCAe was the best among Figure~\ref{fig:cos-sim-x-noise}.
Figure~\ref{fig:compare-recover-noise} shows the recovered images when the standard deviation of $\bm{n}$ is 10.6.
$\bm{x}^K$ recovered by BPDCAe is the best, and the objective value by BPDCAe is the smallest in Figure~\ref{fig:result-noise}.
Thus, BPDCA(e) outperformed the other algorithms with the noisy data.

\begin{figure}%[H]
\begin{minipage}[b]{0.49\linewidth}
    \centering
    \includegraphics[width=\textwidth]{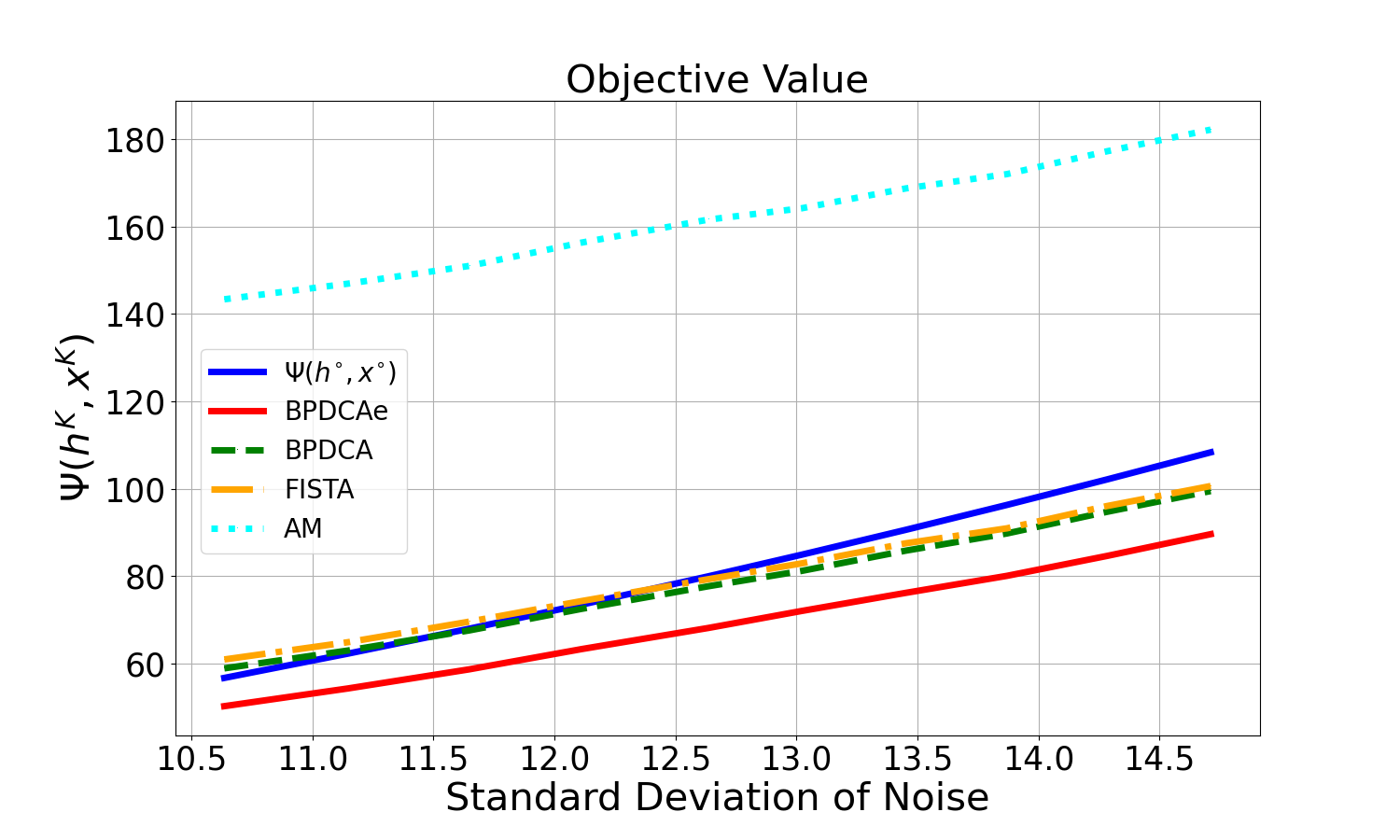}
    \subcaption{Plots of the objective value $\Psi(\bm{h}^K, \bm{x}^K)$.}
    \label{fig:objective-noise}
\end{minipage}
\hfill
\begin{minipage}[b]{0.49\linewidth}
    \centering
    \includegraphics[width=\textwidth]{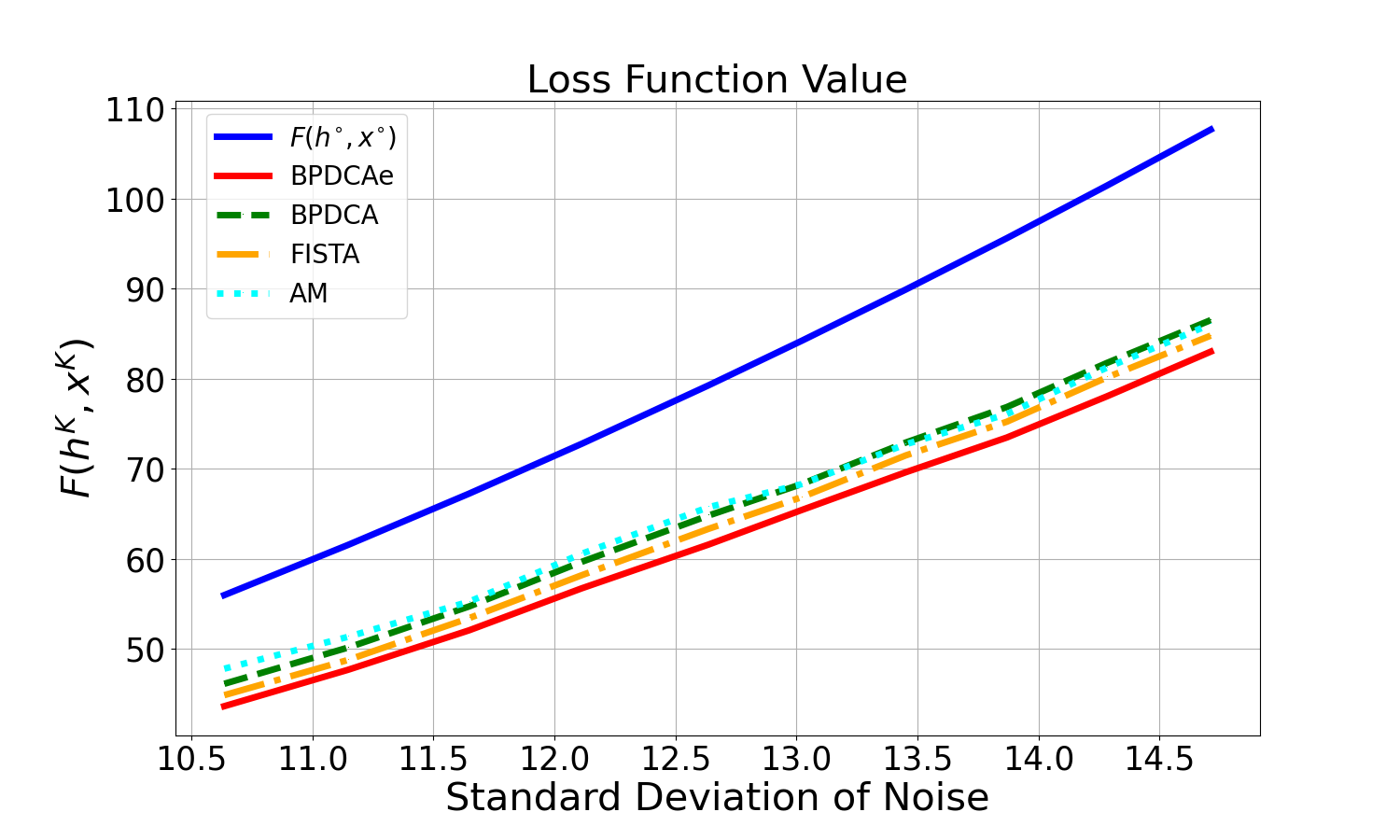}
    \subcaption{Plots of the loss function value $F(\bm{h}^K, \bm{x}^K)$.}
    \label{fig:loss-noise}
\end{minipage}\\
\begin{minipage}[b]{0.49\linewidth}
    \centering
    \includegraphics[width=\textwidth]{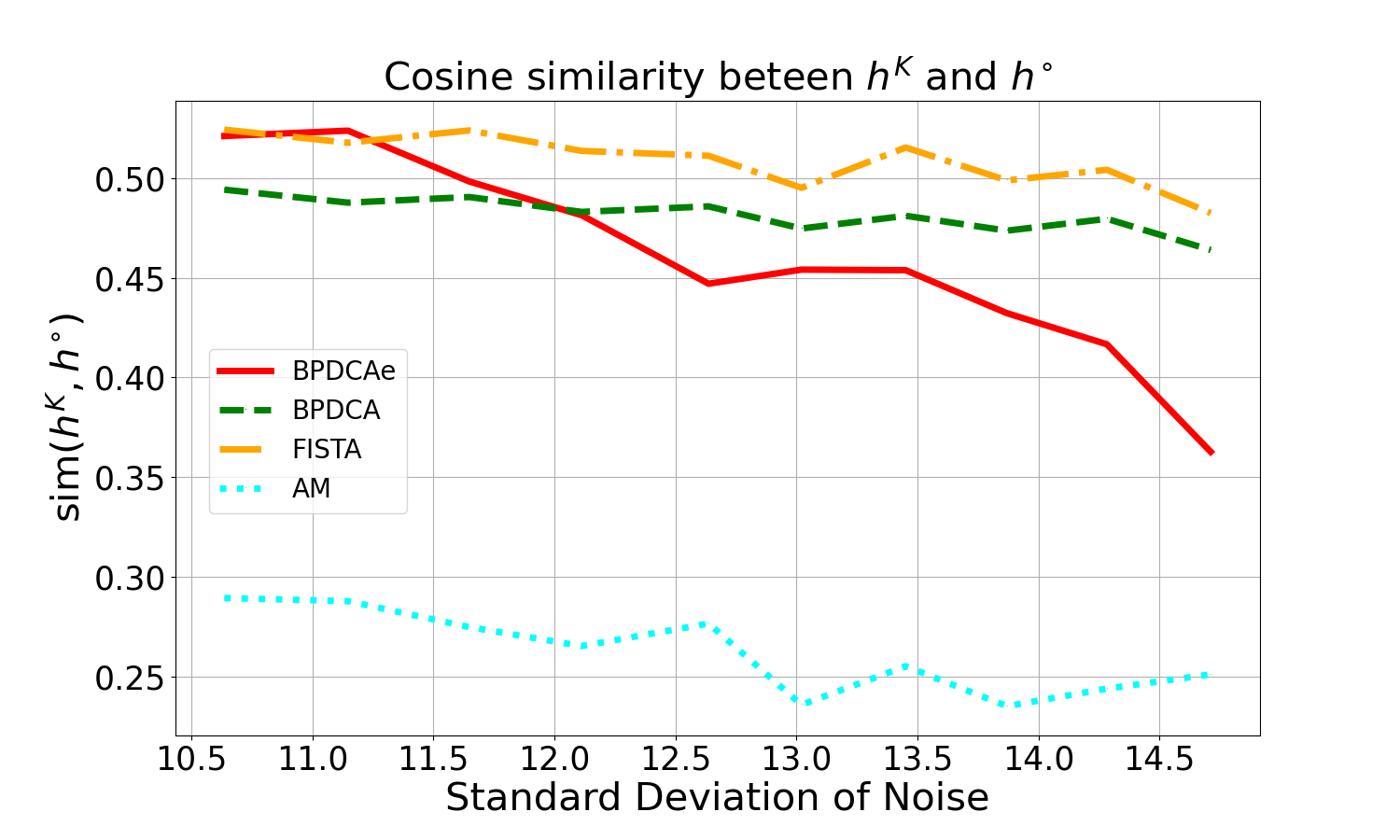}
    \subcaption{Plots of $\cossim (\bm{h}^K, \bm{h}^{\circ})$.}
    \label{fig:cos-sim-h-noise}
\end{minipage}
\hfill
\begin{minipage}[b]{0.49\linewidth}
    \centering
    \includegraphics[width=\textwidth]{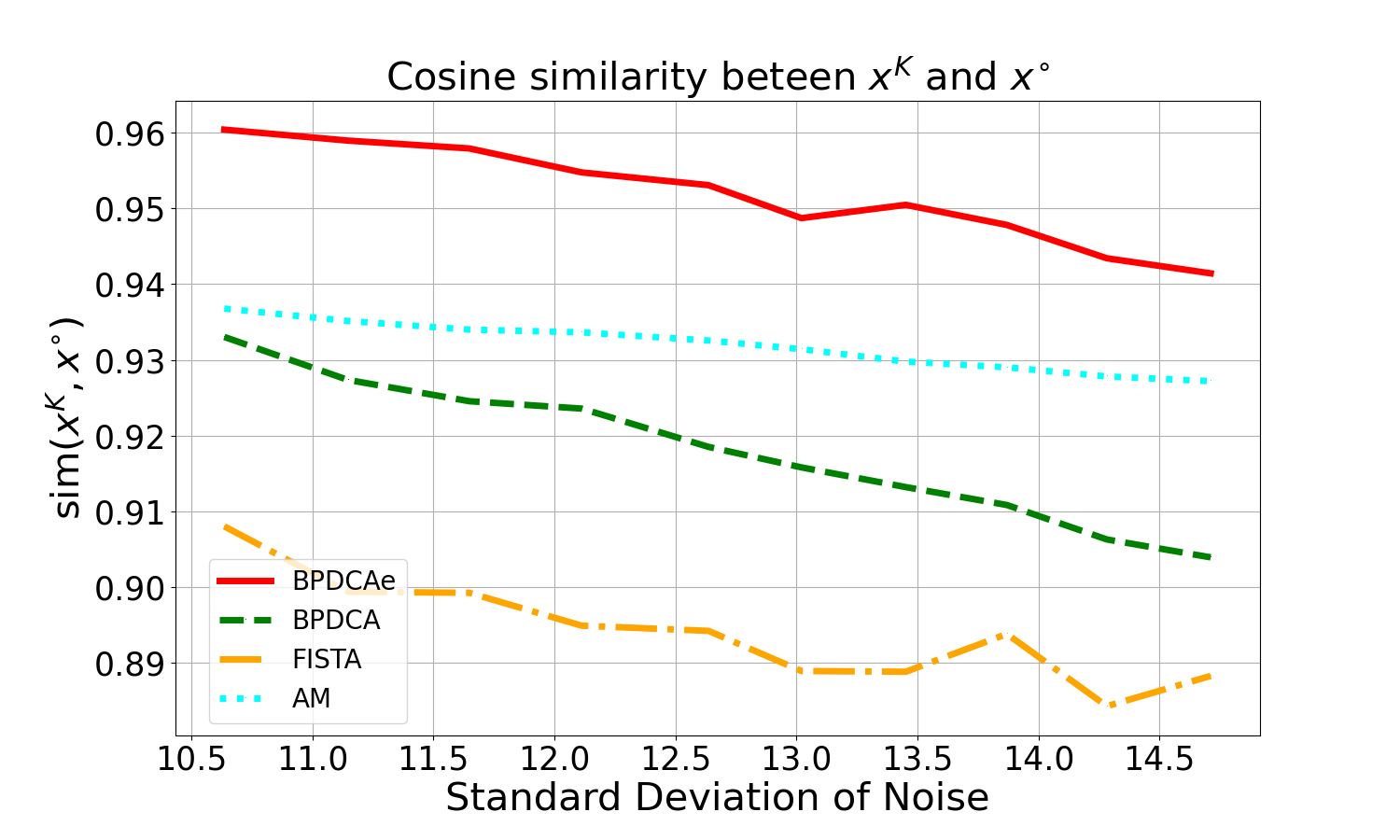}
    \subcaption{Plots of $\cossim (\bm{x}^K, \bm{x}^{\circ})$.}
    \label{fig:cos-sim-x-noise}
\end{minipage}
\caption{Plots of the objective value, the loss function value, and the cosine similarities recovered by each algorithm when $\tilde{\bm{y}}$ contains Poisson noise.}
\label{fig:result-noise}
\end{figure}

\begin{figure}%[H]
\begin{minipage}[b]{0.16\linewidth}
    \centering
    \includegraphics[width=\textwidth]{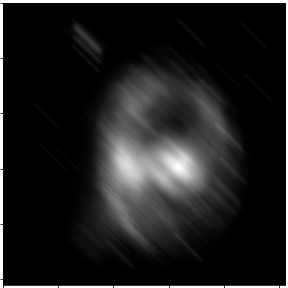}
    \subcaption{$\tilde{\bm{y}}$}
    \label{fig:noisy-image}
\end{minipage}
\hfill
\begin{minipage}[b]{0.16\linewidth}
    \centering
    \includegraphics[width=\textwidth]{recover-result-gt.png}
    \subcaption{$\bm{h}^{\circ}$ and $\bm{x}^{\circ}$}
    \label{fig:gt-noisy}
\end{minipage}
\hfill
\begin{minipage}[b]{0.16\linewidth}
    \centering
    \includegraphics[width=\textwidth]{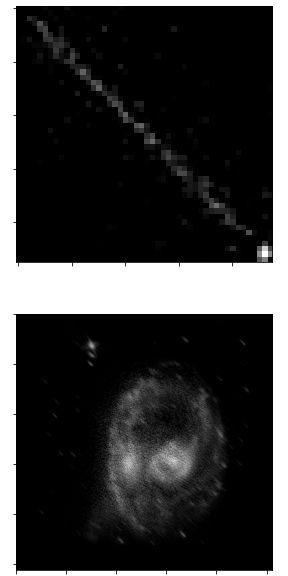}
    \subcaption{BPDCAe}
    \label{fig:bpdcae-noise}
\end{minipage}
\hfill
\begin{minipage}[b]{0.16\linewidth}
    \centering
    \includegraphics[width=\textwidth]{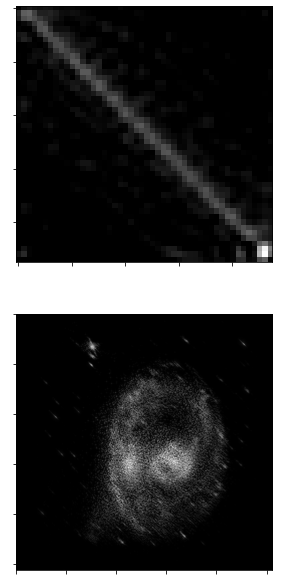}
    \subcaption{BPDCA}
    \label{fig:bpdca-noise}
\end{minipage}
\hfill
\begin{minipage}[b]{0.16\linewidth}
    \centering
    \includegraphics[width=\textwidth]{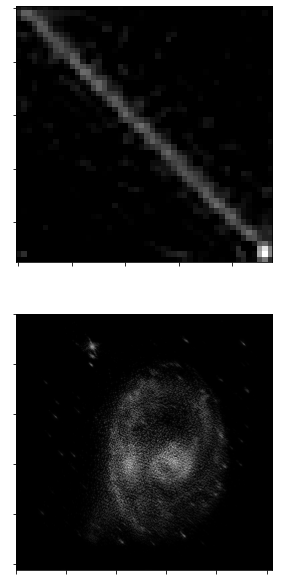}
    \subcaption{FISTA}
    \label{fig:fista-noise}
\end{minipage}
\hfill
\begin{minipage}[b]{0.16\linewidth}
    \centering
    \includegraphics[width=\textwidth]{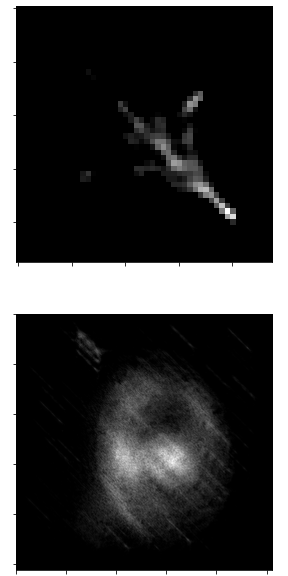}
    \subcaption{AM}
    \label{fig:am-noise}
\end{minipage}
\caption{(a) the noisy data; (b--f) the upper row shows $\bm{h}^K$, and the lower row shows $\tilde{\bm{A}}\bm{x}^K$ when $\tilde{\bm{y}}$ contains Poisson noise.}
\label{fig:compare-recover-noise}
\end{figure}

Finally, we demonstrate that BPDCA(e) is effective with another blur kernel.
We executed similar experiments using a Gaussian blur $\bm{f}$ and another image $\bm{g}$. Figures~\ref{fig:objective-log-diff-butterfly},~\ref{fig:cos-sim-h-butterfly}, and~\ref{fig:cos-sim-x-butterfly} show $\{\log_{10}|\Psi(\bm{h}^k,\bm{x}^k) - \Psi^{\circ}|\}$, $\{\cossim (\bm{h}^k, \bm{h}^{\circ})\}$, and $\{\cossim (\bm{x}^k, \bm{x}^{\circ})\}$, respectively, and Figure~\ref{fig:compare-recover-butterfly} shows the recovered images.
The performance in the sense of the objective values and the cosine similarities of BPDCAe is almost the same as that of FISTA.
The images recovered by BPDCAe, FISTA, and AM have almost no difference from Figures~\ref{fig:bpdcae-butterfly},~\ref{fig:fista-butterfly}, and~\ref{fig:am-butterfly}.
Thus, depending on types of the blur kernel $\bm{f}$ and the image $\bm{g}$, BPDCA(e) has the same results as the other algorithms.
As we saw here, the performance of BPDCA(e) is almost the same as or superior to that of the other algorithms.

\begin{figure}[t]
    \centering
    \includegraphics[width=.9\textwidth]{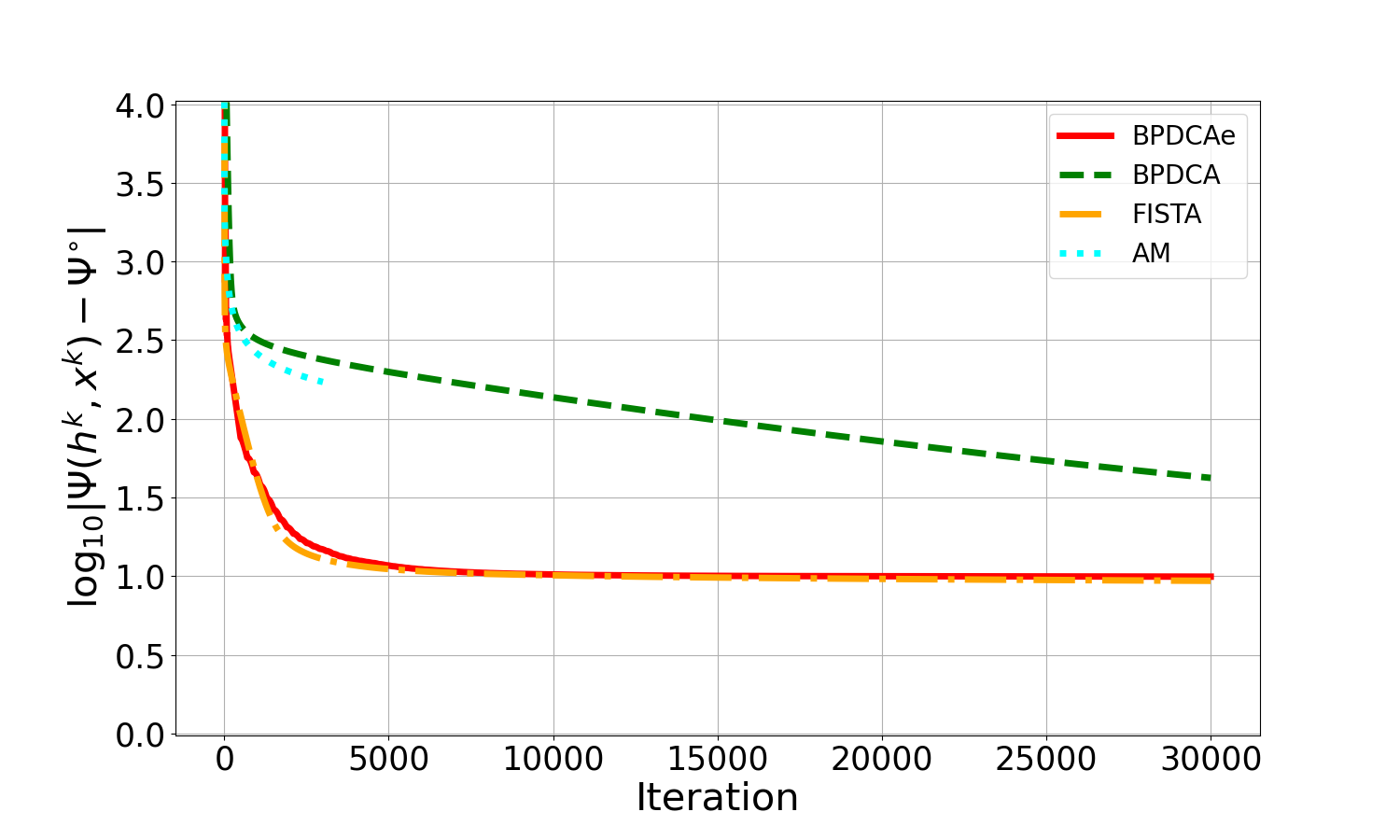}
    \caption{Plots of $\{\log_{10}|\Psi(\bm{h}^k, \bm{x}^k)-\Psi^{\circ}|\}$ at each iteration when $\bm{f}$ is a Gaussian blur.}
    \label{fig:objective-log-diff-butterfly}
\end{figure}

\begin{figure}[t]
\begin{minipage}[b]{0.49\linewidth}
    \centering
    \includegraphics[width=\textwidth]{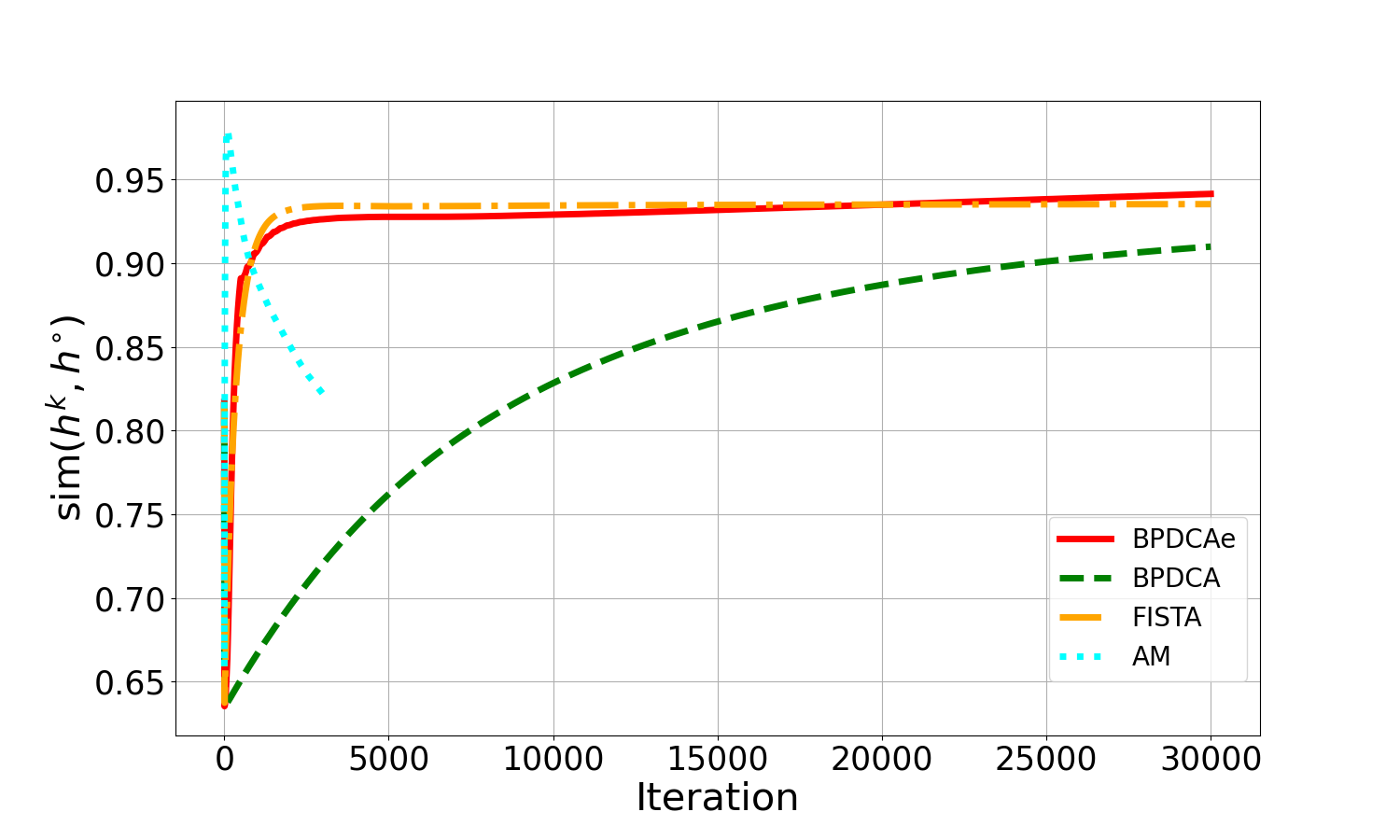}
    \subcaption{Plots of $\{\cossim (\bm{h}^k, \bm{h}^{\circ})\}$.}
    \label{fig:cos-sim-h-butterfly}
\end{minipage}
\hfill
\begin{minipage}[b]{0.49\linewidth}
    \centering
    \includegraphics[width=\textwidth]{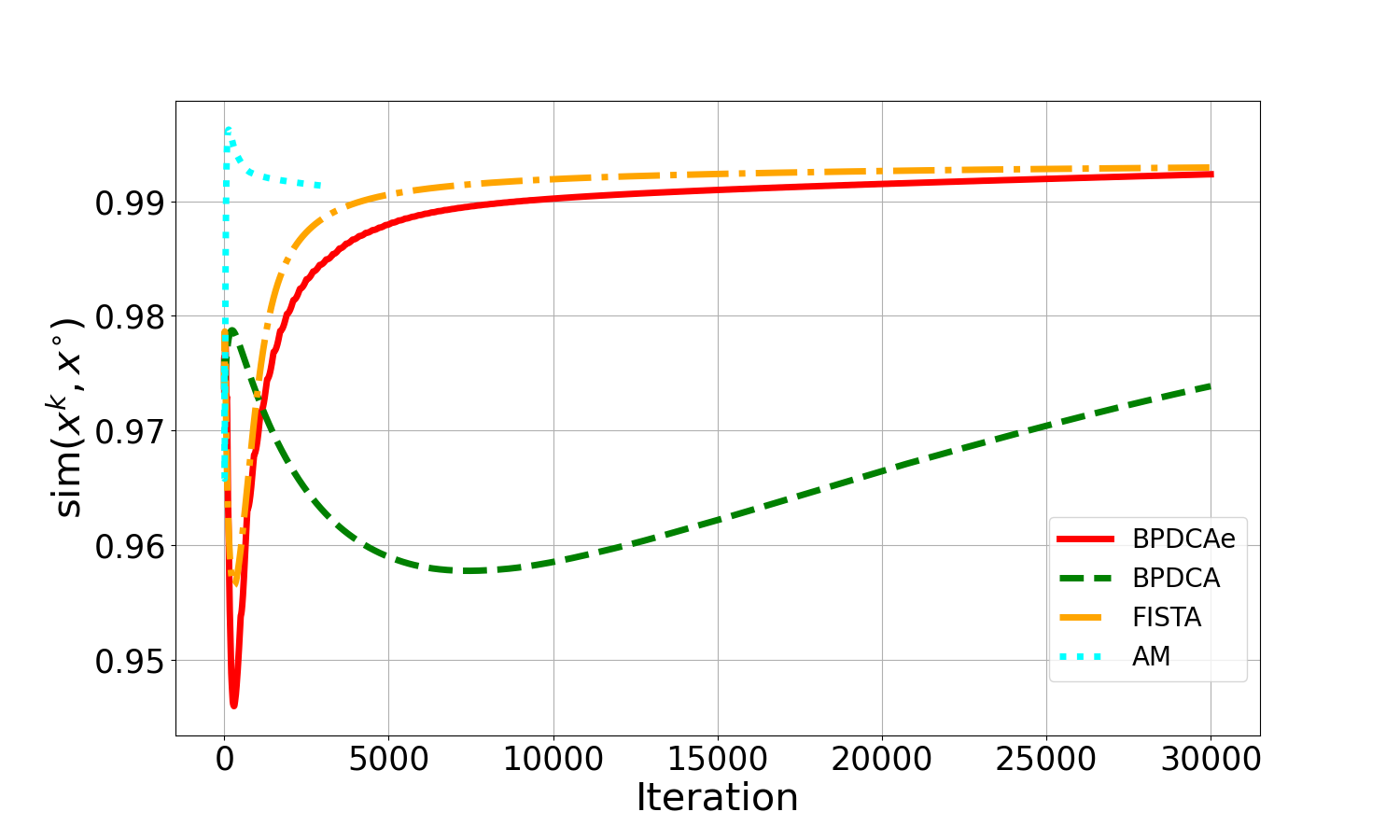}
    \subcaption{Plots of $\{\cossim (\bm{x}^k, \bm{x}^{\circ})\}$.}
    \label{fig:cos-sim-x-butterfly}
\end{minipage}
\caption{Plots of the cosine similarities between the $k$th point and the ground truth when $\bm{f}$ is a Gaussian blur.}
\label{fig:cos-sim-butterfly}
\end{figure}

\begin{figure}[t]
\begin{minipage}[b]{0.16\linewidth}
    \centering
    \includegraphics[width=\textwidth]{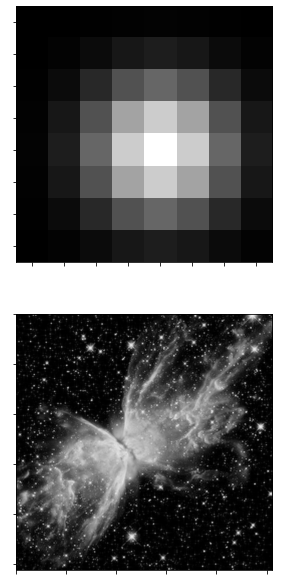}
    \subcaption{$\bm{h}^{\circ}$ and $\bm{x}^{\circ}$}
    \label{fig:butterfly}
\end{minipage}
\hfill
\begin{minipage}[b]{0.16\linewidth}
    \centering
    \includegraphics[width=\textwidth]{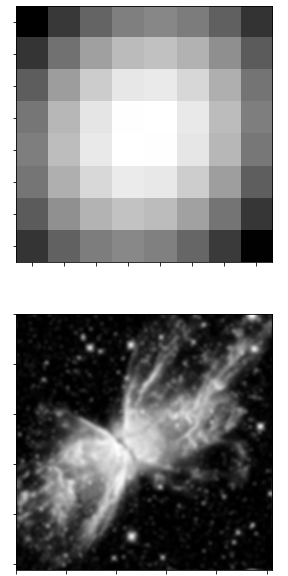}
    \subcaption{$\bm{h}^0$ and $\bm{x}^0$}
    \label{fig:init-butterfly}
\end{minipage}
\hfill
\begin{minipage}[b]{0.16\linewidth}
    \centering
    \includegraphics[width=\textwidth]{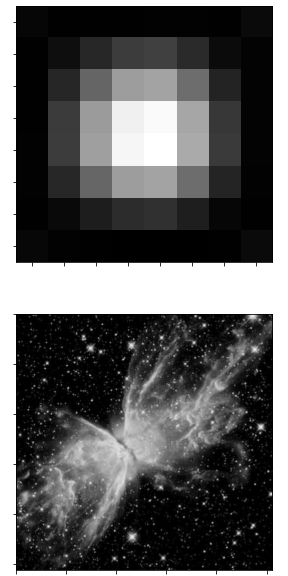}
    \subcaption{BPDCAe}
    \label{fig:bpdcae-butterfly}
\end{minipage}
\hfill
\begin{minipage}[b]{0.16\linewidth}
    \centering
    \includegraphics[width=\textwidth]{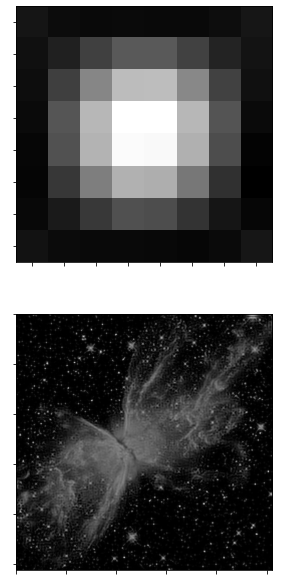}
    \subcaption{BPDCA}
    \label{fig:bpdca-butterfly}
\end{minipage}
\hfill
\begin{minipage}[b]{0.16\linewidth}
    \centering
    \includegraphics[width=\textwidth]{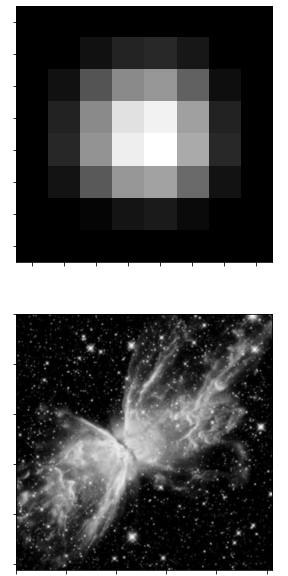}
    \subcaption{FISTA}
    \label{fig:fista-butterfly}
\end{minipage}
\hfill
\begin{minipage}[b]{0.16\linewidth}
    \centering
    \includegraphics[width=\textwidth]{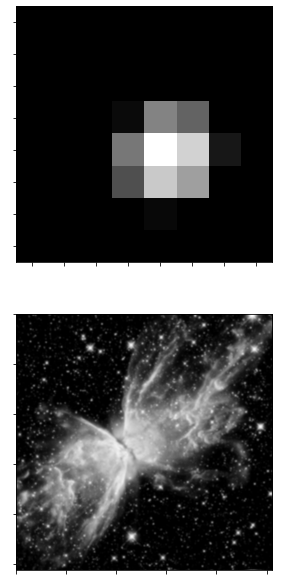}
    \subcaption{AM}
    \label{fig:am-butterfly}
\end{minipage}
\caption{The upper row shows $\bm{h}^K$, and the lower row shows $\tilde{\bm{A}}\bm{x}^K$ when $\bm{f}$ is a Gaussian blur.}
\label{fig:compare-recover-butterfly}
\end{figure}

\section{Conclusion}\label{sec:conclusion}
In blind deconvolution, recovering the original signal and the filter is naturally formulated as a minimization of a quartic objective function.
While existing first-order methods are not theoretically supported because the differentiable part of the objective function does not have a Lipschitz continuous gradient, our Bregman proximal DC algorithms efficiently work for this problem.
We found an appropriate DC decomposition and kernel generating distance, and we proved the convergence theoretically using them.
Our numerical experiments on image deblurring demonstrated that BPDCAe outperformed other existing algorithms and successfully recovered the original image.
The numerical successful of image deblurring is also because $g$ is represented sparsely in the wavelet domain.

While BPDCAe outperformed other existing algorithms, we still have several tasks.
In theory, although the convergence rate of BPDCA(e) characterized by the Kurdyka-\L ojasiewicz (KL) exponent has been established in~\cite{takahashi21}, the actual value of the KL exponent for blind deconvolution has not been revealed.
In practice, we have not discussed how to decide on better value of the regularization parameter $\theta$.

\section*{Acknowledgements}
This work was supported by JSPS KAKENHI Grant Number JP19K15247; JSPS KAKENHI Grant Number JP20H01951.

\section*{Declaration of Competing Interest}
The authors declare that they have no known competing financial interests or personal relationships that could have appeared to influence the work reported in this paper.

\bibliographystyle{plain}
\bibliography{main}

\end{document}